\newtheorem{theorem}{Theorem}[section]
\newtheorem{proposition}[theorem]{Proposition}
\newtheorem{remark}[theorem]{Remark}
\newcommand{\Nz}{\mathbb{N}}
\newcommand{\epsi}{\varepsilon}
\newcommand{\eps}{\varepsilon}
\def\cL{\mathscr{L}}
\def\cA{\mathscr{A}}
\def\cU{\mathscr{U}}
\def\cV{\mathscr{V}}
\def\cF{\mathscr{F}}
\def\eps{\varepsilon}
\def\cP{\mathscr{P}}
\renewcommand{\P}{\mathbb{P}}
\renewcommand{\d}{{\mathrm d}}
\newcommand{\ds}{{\mathrm d} s}
\def\to{\rightarrow}
\def\wto{\rightharpoonup}
\def\wstarto{\stackrel{*}{\rightharpoonup}}
\def\embed{\hookrightarrow}
\newcommand{\weakto}{\rightharpoonup}
 \newcommand{\E}{{\mathbb E}}
\begin{document}
\title[Energy-Dissipation Principle for SPDEs]
{The Energy-Dissipation Principle for \\  stochastic parabolic equations}

\author{Luca Scarpa}
\address[Luca Scarpa]{Department of Mathematics, Politecnico di Milano, 
Via E.~Bonardi 9, 20133 Milano, Italy.}
\email{luca.scarpa@polimi.it}
\urladdr{http://www.mat.univie.ac.at/$\sim$scarpa}

\author{Ulisse Stefanelli}
\address[Ulisse Stefanelli]{Faculty of Mathematics,  University of Vienna, Oskar-Morgenstern-Platz 1, A-1090 Vienna, Austria,
Vienna Research Platform on Accelerating Photoreaction Discovery,
University of Vienna, W\"ahringer Str. 17, 1090 Vienna, Austria, $\&$ Istituto di Matematica Applicata e Tecnologie Informatiche ``E. Magenes'' - CNR, v. Ferrata 1, I-27100 Pavia, Italy}
\email{ulisse.stefanelli@univie.ac.at}
\urladdr{http://www.mat.univie.ac.at/$\sim$stefanelli}

\subjclass[2010]{35K55, 35R60, 60H15}

\keywords{Variational principle, parabolic SPDE, generalized It\^o's
  formula,  null-minimization,  stability, optimal control}   

\begin{abstract} 
The Energy-Dissipation Principle provides a variational tool
for the analysis of parabolic evolution problems: solutions are
characterized as so-called null-minimizers of a global functional on
entire trajectories.  This variational technique allows for applying the general
results of the calculus of variations to the underlying differential
problem and has been successfully applied in a variety
of deterministic cases, ranging from doubly nonlinear flows to curves
of maximal slope in metric spaces. The aim of this note is to extend
the Energy-Dissipation Principle  to stochastic parabolic
evolution equations. Applications to stability and optimal control are
also presented.
\end{abstract}

\maketitle

\section{Introduction}
\label{sec:intro}

This note is concerned with a global variational approach to the 
Cauchy problem for the  abstract
stochastic parabolic evolution equation
\begin{equation}\label{eq:0}
  \d u + \partial \phi(u)\,\d t \ni F(\cdot,u)\,\d t + G(\cdot,u)\,\d
  W, \quad  u(0)=u^0. 
\end{equation} 
The trajectory $u: \Omega \times [0,T] \to H$ is defined on the
stochastic basis $(\Omega,\cF,(\cF_t)_{t\in[0,T]},\P)$ and the bounded
time interval $[0,T]$ and takes values in  the  separable Hilbert space
$H$. The functional $\phi$ is assumed to be convex and lower semicontinuous, the
nonlinearities $F$ and $G$ are taken to be suitably smooth, and $W$ is 
a cylindrical Wiener process on a second separable Hilbert space
$U$. More precisely, solutions $u$ of equation \eqref{eq:0} are asked to be {\it It\^o
processes} of the form 
\begin{equation}\label{ito_process}
  u(t)=u^d(t) + \int_0^tu^s(s)\,\d W(s), \quad t\in[0,T],
\end{equation}
where $u^d$ is an absolutely continuous process
and $u^s$ is a $W$-stochastically integrable process. In particular,
we look for solutions $u$ of equation \eqref{eq:0} in the space 
$\cU$ consisting of all  It\^o
processes of the form \eqref{ito_process} with $u^d \in L^2_\cP (\Omega;
H^1(0,T;H))$ and 
$u^s\in L^2_\cP (\Omega;
L^2(0,T; \cL^2(U,V)))$.  
 Here,   $ \cL^2(U,V)$
indicates the set of Hilbert-Schmidt operators from $U$ to $V$, 
where 
$V$ is a separable reflexive Banach space, densely and compactly
embedded into $H$.

Existence, uniqueness, and continuous dependence on the initial 
data  for stochastic evolution problems in the form \eqref{eq:0}
are  addressed  within  the classical variational theory 
by {Pardoux} \cite{Pard0,Pard} and {Krylov
\&  Rozovski\u \i},\cite{kr} in the sense of analytically weak
 or {\it martingale}  solutions:
we refer also to the  monographs \cite{dapratozab,pr}
for a general presentation. In the context of analytically strong solutions, 
existence for stochastic equations in the subdifferential form \eqref{eq:0}
has been obtained by {Gess} \cite{Gess-strong}.
Well-posedness results in a weak sense
have also been obtained 
under more general conditions in  the monograph \cite{LiuRo} and
in the  papers 
\cite{mar-scar-diss,mar-scar-ref, mar-scar-div, scar-div}.

Following the seminal remarks by De Giorgi \cite{Ambrosio}, one can variationally characterize solutions of the Cauchy
problem \eqref{eq:0} in terms of trajectories minimizing the 
{\it Energy-Dissipation-Principle (EDP)} functional $I:\cU \to [0,\infty]$
defined as

\begin{align}
 &I (u) = 
          \E\phi(u(T)) - \E \phi(u(0)) + \frac12 \E \int_0^T \|
  \partial_t u^d\|^2  \, \ds    + \frac12 \E \int_0^T \| \partial
   \phi(u)-F(\cdot,u)  \|^2 \, \ds \nonumber\\
  &\qquad- \E \int_0^T
  (\partial_t u^d, F(\cdot,u)) \, \ds \ - \frac12 \E \int_0^T {\rm Tr}_H\, L(u) \, \ds 
    \nonumber\\
    &\qquad+ 2C_\phi\E\int_0^T \| u^s -
    G(\cdot,u)\|^2_{\cL^2(U,V)}  \, \ds   +\E\|u(0)- u^0\|^2_V 
\label{eq:EDP}
\end{align} 

\noindent if $u \in C([0,T];L^2(\Omega,\cF;V))$  and $I(u)=\infty$
    otherwise.
    Here, $L(u) := u^s (u^s )^* D_{\mathcal
  G}\partial \phi(u)$  
and  $u^d,\, u^s$ are associated to $u$ via the decomposition
\eqref{ito_process}. The symbols $(\cdot, \cdot)$ and $\| \cdot\|$ stand for
the scalar product and the norm in $H$, respectively, and $ \partial
\phi(u)$ denotes the subdifferential of $\phi$, here assumed to be
Gateaux-differentiable  from $V$ to $V^*$.  The constant $ C_{\phi}>0$ depends on $\phi$
and is defined in \eqref{ass:phi2} below. Within our assumption setting, we will have that $
L(u) \in \cL^1(V,V)$, where the latter is the space of {\it  trace-class} operators from $V$ to $V$. The symbol ${\rm Tr}_H$
hence denotes the {\it trace} of the operator with respect to an orthonormal
system of $H$ in $V$.
The fact that $I$ takes nonnegative values hinges on the validity of an It\^o formula for $\phi$,
see Proposition~\ref{prop:ito} below. 

The focus of this note is to discuss the equivalence of solutions
of equation \eqref{eq:0} and {\it null-minimizers} of the EDP functional
$I$. Under general assumptions on $\phi$, $F$, and $G$, our main
result, Theorem \ref{thm:main}, states that 
$$u \ \ \text{solves \eqref{eq:0}} \ \ \Leftrightarrow \  \ 0 =
I(u) = \min_{\cU}I.$$
The core of this characterization resides on the nature of the EDP
functional $I$, which in the present setting corresponds to the
{\it squared residual} of the system
$$\partial_t u^d + \partial \phi(u) = F(\cdot, u), \quad u^s =
G(\cdot,u),\quad u(0)=u^0,$$
as illustrated in Proposition \ref{prop:equiv} below. The approach in
\eqref{eq:EDP} is however more general and can be adapted in Banach
spaces and doubly nonlinear problems  as well,  see Remark \ref{w10}
below. 

The residual nature of the EDP functional entails that the EDP variational
principle $0 = I(u) = \min_{\cU}I$ is  not  a mere minimization
problem, for one is asked to check that the minimum is actually
$0$, motivating the use of the term {\it
  null-minimization}. This issue is not uncommon for global
variational approaches and can be traced back to celebrated {\it Brezis-Ekeland-Nayroles}
principle
\cite{Brezis-Ekeland76b,Brezis-Ekeland76,Nayroles76,Nayroles76b}. 
In the  current stochastic case, the existence of a unique
null-minimizer follows from the well-posedness of the differential
problem \eqref{eq:0}. Still, minimization cannot be tackled directly,
for the functional $I$ shows some limited semicontinuity properties,  see
Section \ref{sec:coercivity}.
Apart from the case when $\Omega$ is atomic, this prevents us from providing an alternative existence theory for
problem \eqref{eq:0}. On the other hand,   we make use of the  EDP
characterization  for proving  
stability of the Cauchy problem for equation \eqref{eq:0} under
perturbations of the data $(u^0,\phi, F,G)$ in Section
\ref{sec:stability} and for discussing  the  penalization of an
optimal control problem constrained to  \eqref{eq:0}  in Section
\ref{sec:control}.

Before moving on, let us mention two alternative global variational
principles for SPDEs of the class \eqref{eq:0}. The mentioned
Brezis-Ekeland-Nayroles principle has been  indeed  extended to the stochastic
case. Following some specific application   in
\cite{Barbu1,Barbu2,Barbu3}, a general theory has been presented by
Barbu \& R\"ockner \cite{Barbu6,Barbu5,Barbu7} in the linear multiplicative
case and by Boroushaki \& Ghoussoub \cite{Boroushaki}   in the nonlinear  multiplicative
case. In our notation, the stochastic Brezis-Ekeland-Nayroles
functional from \cite{Boroushaki} reads
\begin{align*}
  u \ \mapsto \ &\E \int_0^T \left(\phi(u) + \phi^*(F(\cdot,u)-\partial_tu^d) -
  (F(\cdot,u)-\partial_tu^d,u)\right)\, \d s \\
  &+ \frac12\E\int_0^T \| u^s -
  G(\cdot,u)\|^2_{\cL^2(U,V)}\, \ds + \E \|u(0) -u^0\|^2. 
\end{align*}
Here, $\phi^*$ stands for the Legendre conjugate of $\phi$. Recall that $\phi(u)+\phi(v) \geq (v,u) $ for all $u,\,v \in H$ and
that equality holds if and only if $v \in \partial \phi(u)$. Hence, a
null-minimizer  of the latter necessarily solves the Cauchy problem
for \eqref{eq:0}. By resorting to the far-reaching theory of anti-self
dual Lagrangians, the existence of null-minimizers  of the Brezis-Ekeland-Nayroles
functional has been  ascertained  in 
\cite{Boroushaki}.

In the additive case, a different global variational approach to \eqref{eq:0} is in
\cite{ScarStef-WED}, where the {\it
  Weighted-Energy-Dissipation} functional
\begin{align*}
  u \ \mapsto \ & \E\displaystyle\int_0^T  e^{{-s}/{\eps}}
  \left(\frac\eps2\|\partial_t u^d\|^2 + \phi(u)  
     - (F,u)
 \right)\, \ds  + \E\displaystyle\int_0^T 
 e^{{-s}/{\eps}}\frac12\|u^s- G\|^2_{\cL^2(U,V)}\, \ds
\end{align*}
 is investigated. 
This  {\it strictly convex} functional admits a unique minimizer  $u_\epsi$ over trajectories with
given initial value $u^0$. At all levels $\epsi>0$, such
minimizers solve an
elliptic-in-time regularization of equation \eqref{eq:0},
complemented by an extra Neumann boundary condition at the final time
$T$. In particular, the minimization of the
Weighted-Energy-Dissipation functional corresponds to a {\it
  noncausal}  differential  problem.
As $\epsi \to 0$ one can prove \cite{ScarStef-WED} that $u_\epsi $
converge to  the 
solution to the Cauchy problem \eqref{eq:0}. In particular,
causality is restored in the limit. 

Compared with the Brezis-Ekeland-Nayroles approach, the null-minimization of the EDP
functional is a priori not restricted to the case of a convex 
$\phi$ (although we limit ourselves to convex $\phi$ in this note, for
simplicity) and
is easily adapted to more nonlinear situations, see Remark
\ref{w10} below. With respect to the Weighted-Energy-Dissipation
approach, the null-minimization of the EDP functional does not require
to take the extra limit $\epsi \to 0$ and is causal. It is hence
better suited to discuss convergence issues.

We collect notation, assumptions, and the statement of the
characterization, i.e., Theorem \ref{thm:main}, in Section
\ref{sec:setting}. Section \ref{sec:proof} is devoted to the proof of a
generalized It\^o formula, which is crucial for studying the EDP
functional and brings to the proof of the characterization. We discuss
in Section \ref{sec:coercivity}   the coercivity and the restricted 
lower-semicontinuity of the EDP functional, as well as the fact
that minimizers of $I$ are actually null-minimizers. We then  obtain a stability result with respect to data
perturbations in Section
\ref{sec:stability}. Eventually, in Section~\ref{sec:control} we
discuss a general penalization procedure for  an  optimal control
problem based on~\eqref{eq:0}.

\section{Setting and statement}
\label{sec:setting}

In preparation of the statement of our main result, let us collect
here the assumptions on spaces and nonlinearities, which will be
tacitly assumed throughout the paper.

Let  $(\Omega,\cF,(\cF_t)_{t\in[0,T]},\P)$ be a stochastic basis,
   with right-continuous and saturated filtration
   $(\cF_t)_{t\in[0,T]}$, $H$ and $U$ be separable
    Hilbert spaces, and $W$ be  a 
    cylindrical Wiener process on $U$. Moreover, let $V$ be a separable and reflexive Banach
    space, with $V
    \subset H$  densely  and compactly, so that $V \subset
    H \subset V^*$ (dual) is a Gelfand triplet.  We recall that
    the  symbols $(\cdot, \cdot)$
    and $\| \cdot \|$ denote the scalar product and the norm in
    $H$.  Moreover,  $\langle \cdot, \cdot\rangle$ stands for the duality product
    between $V^*$ and $V$.
    The norm in any other Banach space $E$ will be denoted by $\|
    \cdot \|_E$.

   In the following, we use the classical notation $\cL(V,H)$,
   $\cL^2(U,V)$, and $\cL^1(V,V)$ to indicate the space of linear and
   continuous operators from $V$ to $H$, the space of Hilbert-Schmidt
   operators from $U$ to $V$, and the space of trace-class operators
   on $V$, respectively. The symbols $\cL_s(V,H)$ and $\cL_w(V,H)$
   indicate that the space $\cL(V,H)$ is endowed with the so-called
   {\it strong}, resp. {\it weak} operator topology. For all $L\in \cL^1(V,V)$ we denote by
   ${\rm Tr}_H \, L$ the {\it trace} of the operator with respect to an orthonormal
   system $(e_k)_{k\in \Nz}$ of $H$ contained in $V$, namely,
   $${\rm Tr}_H \, L = \sum_{i=1}^\infty (Le_k,e_k).$$


We denote by $\cP$ the progressive $\sigma$-algebra on
$\Omega\times[0,T]$ and write $L^s(\Omega; E)$ and $L^s(0,T; E)$
for the spaces of strongly measurable Bochner-integrable $E$-valued functions 
on $\Omega$ and $(0,T)$, for all
$s\in[1,\infty]$ and a  Banach space $E$.
For $s,r\in[1, \infty)$ we use the symbol $L^s_\cP(\Omega;L^r(0,T; E))$ to indicate
that measurability is intended with respect to the progressive
$\sigma$-algebra $\cP$.

In the following, we will make use of the space $\cU$ of  It\^o
processes given by
\begin{align*}
  \cU=\{&u \in L^2(\Omega;C([0,T];H))  \, : \, \text{the decomposition \eqref{ito_process}
       holds for} \\
  &\quad  u^d \in L^2_\cP(\Omega;H^1(0,T;H)), \ u^s \in
    L^2_\cP(\Omega;L^2(0,T;\cL^2(U,V))) \}.
\end{align*}
 Note that the representation $u = u^d + u^s \cdot W$ is unique 
 and defines  an isomorphism
$$\cU \simeq L^2_\cP(\Omega;H^1(0,T;H))\times
L^2_\cP(\Omega;L^2(0,T;\cL^2(U,V))).$$
In the following, we will systematically (and tacitly) use such isomorphism by
identifying $u\in \cU$ with the corresponding pair of processes
$(u^d,u^s) \in L^2_\cP(\Omega;H^1(0,T;H))\times
L^2_\cP(\Omega;L^2(0,T;\cL^2(U,V))) $. We will also make use of the
the subspace $\cV \subset \cU$ given by
$$\cV:=\cU \cap C([0,T];L^2(\Omega,\cF;V)\}.$$

We ask 
$\phi:H\to[0, \infty]$ to be a convex and lower
  semicontinuous with $\phi(0)=0$  and {\it essential domain}   $
  D(\phi)=\{u \in H  \ : \ \phi(u) <\infty\}=V$,  and require the
  subdifferential 
  $\partial \phi:
  D(\partial\phi)\subseteq H \to H$, where $ D(\partial\phi) =\{u \in H  \ :
  \ \partial \phi(u) \not =\emptyset\}$, to be single-valued and 
  coercive in the following sense: 
  \begin{align}
    \exists\, c_\phi>0: \quad   (\partial \phi(u_1) -
    \partial \phi(u_2),u_1 - u_2)\geq c_\phi \| u_1 - u_2 \|^2_V  
    \quad\forall u_1, \, u _2 \in D(\partial \phi).\label{ass:phi}
  \end{align}
%
We moreover ask $\phi$ to be continuous at some point of  its
domain , so 
 that the subdifferential $\partial(\phi|_V): V\to V^*$ of the
 restriction $\phi|_V:V \to [0, \infty]$ is maximal
 monotone and coincides with $\partial \phi$ on $V$. We further assume 
$\partial \phi :V\to V^*$ to be G\^ateaux-differentiable with
G\^ateaux-differential $
  D_{\mathcal G}\partial \phi\in C(V; \cL_s(V,V^*))$  fulfilling
 \begin{align}
  \label{ass:phi2}
\exists\, C_{\phi} >0:\qquad \|D_{\mathcal G}\partial \phi (u)\|_{\cL(V,V^*)}
   \leq  C_{\phi} \quad\forall u\in V.
 \end{align}
 Note that the latter  entails that $\partial \phi$ is linearly bounded
 from $V$ to $V^*$. Indeed, we have that 
 $$ \| \partial \phi(u)\|_{V^*} = \left\|\int_0^1 \langle D_{\mathcal G} \partial
 \phi(ru),u\rangle \, \d r \right\|_{V^*} \leq \int_0^1 \|D_{\mathcal G} \partial
 \phi(ru)
\|_{\cL(V,V^*)} \| u \|_V \, \d r \leq C_\phi \|u\|_V$$
and we can compute that
\begin{equation}
  \frac{c_\phi}{2}\| u \|_V^2 \leq  \phi(u) = \int_0^1 ( \partial \phi(r u),
u) \, \d r \leq \int_0^1 r  C_\phi\| u\|_V^2\, \d r =
\frac{C_\phi}{2}\| u \|_V^2\quad \forall u \in V.\label{eq:linearly}
\end{equation}
Moreover, we have  the control
 \begin{equation}
   \label{eq:control_L}
  \forall\, u \in \cU : \qquad {\rm Tr}_H\, L(u) \leq
   C_{\phi}  \|
  u^s \|_{\cL^2(U,V)}^2  \quad \text{a.e.~in~}\Omega\times(0,T), 
\end{equation}
where we recall that $L(u) := u^s (u^s)^* D_{\mathcal G} \partial \phi(u)$.

We require the map  $F:  [0,T]\times H \to H$ to be Carath\'eodory
with $F(\cdot,0)\in L^2 (0,T;H)$  and to 
be Lipschitz continuous, uniformly with respect to $t $. More precisely, we assume that 
\begin{align}
  &\exists\, c_F>0: \quad   \|F(t,u_1) - F(t,u_2)\| \leq c_F\|u_1 - u_2\|
  \nonumber\\
  &\quad  \forall u_1,\, u_2 \in H, \ \text{for a.e.}\   t \in   (0,T). \label{ass:F}
\end{align}
The latter specifically implies that the process $F(\cdot, u)$ belongs
to $L^2_\cP(\Omega;
L^2(0,T;H))$ for all $u \in L^2_\cP(\Omega;
L^2(0,T;H))$.

The map $G:  [0,T] \times V \to \cL^2(U,V)$ is also asked
to be Carath\'eodory with $G(\cdot,0)\in L^2 (0,T;\cL^2(U,V))$ 
and uniformly Lipschitz continuous and linearly bounded  with respect to $t$, namely,  
\begin{align}
  &\exists\, c_G>0: \qquad   \|G( t,u_1) - G(  t,u_2)\|_{\cL^2(U,H)} \leq c_G\|u_1 - u_2\| \nonumber\\
  &\quad \forall u_1,\, u_2 \in H,  \ \text{for a.e.}\  t \in (0,T),\label{ass:G}
\end{align}
and 
\begin{equation}
  \exists\, c_{G,2}>0: \qquad   \|G(\cdot,u)\|_{\cL^2(U,V)}\leq
    c_{G,2}\left(1+\|u \|_V\right)\quad \forall u \in V. \label{ass:G0}
  \end{equation}
In particular, $G(\cdot, u)\in L^2_\cP(\Omega;
L^2(0,T; \cL^2(U,V)))$ for all $u \in L^2_\cP(\Omega;
L^2(0,T;V))$.

Eventually, we prescribe the initial datum
\begin{equation}
  \label{eq:initial}
  u^0 \in L^2(\Omega,\cF_0;V).
\end{equation}

The Cauchy problem for \eqref{eq:0} can hence be specified as
\begin{equation}
  \label{eq:00}
  u(t) =  u^0 + \int_0^t (F(\cdot,u)-\partial \phi(u)) \, \d s + \int_0^t
  G(\cdot,u)\, \d W\quad \forall\, t \in [0,T], \ \P\text{-a.s.}
\end{equation}
where the latter is intended as an equation in $H$.

Under the above assumptions, one can adapt the theory from
\cite{biot}  and \cite{Gess-strong} 
 in order to prove that equation \eqref{eq:00} admits a
unique solution $u \in
\cV$ which in addition belongs to
\begin{align*}
  &L^2_{\cP}(\Omega;L^\infty(0,T;V)) := \big\{v:\Omega\to
    L^\infty(0,T; V)  \ \text{weakly*
      progressively measurable}\nonumber\\
&\hspace{48mm} \text{with} \ \ 
  \E\|v\|_{L^\infty(0,T;V)}^2<\infty
                                             \big\},
                                             \end{align*}
and   can be obtained  as limits of
approximations arising from  Yosida-regularizing
$\partial\phi$. As such, when referring to 
 a  solution of equation
\eqref{eq:00} the regularity $u \in \cV$ will be always
assumed in the following. Note that this is not restrictive, for all
strong-in-time solutions of \eqref{eq:00}, namely, 
$u^d\in L^2(\Omega; W^{1,1}(0,T;H))$, can a posteriori be proved to
 belong to 
 $\cV$, see Remark \ref{w11} below.

The central observation of this note is the following characterization.  

\begin{theorem}[Energy Dissipation Principle]\label{thm:main}
  $u\in \cU$ solves \eqref{eq:00} if and only if $0=I(u) = \min_{\cU} I$.
\end{theorem}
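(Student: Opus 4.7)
My plan is to convert $I$ into a sum of squared residuals by means of the generalized It\^o formula for $\phi$ provided by Proposition~\ref{prop:ito}, from which the equivalence becomes visible by inspection. The entire argument rests on this single identity; once the formula is available, what remains is algebra plus the invocation of the well-posedness of \eqref{eq:00}.

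For an arbitrary $u \in \cU$ with $I(u) < \infty$ (which forces $u \in \cV$, since otherwise $I(u) = \infty$ by definition), I would apply Proposition~\ref{prop:ito} to $\phi(u(t))$ and take expectation, obtaining
\begin{equation*}
\E\phi(u(T)) - \E\phi(u(0)) = \E\int_0^T (\partial\phi(u), \partial_t u^d)\,\ds + \tfrac{1}{2}\E\int_0^T {\rm Tr}_H\, L(u)\,\ds,
\end{equation*}
the martingale contribution vanishing thanks to the integrability ensured by \eqref{ass:phi2} and by $u^s \in L^2_\cP(\Omega; L^2(0,T; \cL^2(U,V)))$. Substituting this identity into \eqref{eq:EDP} removes the boundary term and cancels the two ${\rm Tr}_H\, L(u)$ contributions against each other. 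The remaining integrand collapses through the algebraic identity
\begin{equation*}
(\partial\phi(u),\partial_t u^d) + \tfrac{1}{2}\|\partial_t u^d\|^2 + \tfrac{1}{2}\|\partial\phi(u) - F(\cdot,u)\|^2 - (\partial_t u^d, F(\cdot,u)) = \tfrac{1}{2}\|\partial_t u^d + \partial\phi(u) - F(\cdot,u)\|^2,
\end{equation*}
producing the residual representation
\begin{align*}
I(u) = \tfrac{1}{2}\E\int_0^T \|\partial_t u^d + \partial\phi(u) - F(\cdot,u)\|^2\,\ds &+ 2C_\phi\E\int_0^T \|u^s - G(\cdot,u)\|^2_{\cL^2(U,V)}\,\ds \\
&+ \E\|u(0) - u^0\|^2_V,
\end{align*}
which is manifestly nonnegative and matches the announcement of Proposition~\ref{prop:equiv}.

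From this representation the equivalence follows at once. The three summands are independently nonnegative, so $I(u) = 0$ holds if and only if $\partial_t u^d + \partial\phi(u) = F(\cdot,u)$ a.e.\ in $\Omega\times(0,T)$, $u^s = G(\cdot,u)$ a.e.\ in $\Omega\times(0,T)$, and $u(0) = u^0$ in $V$ $\P$-almost surely. By the uniqueness of the It\^o decomposition $u = u^d + \int_0^\cdot u^s\,\d W$ recorded in Section~\ref{sec:setting}, these three conditions are precisely equivalent to \eqref{eq:00}. The existence of a solution $u \in \cV$ of \eqref{eq:00}, recalled after \eqref{eq:initial}, then furnishes a null-minimizer, so that $0 = \min_\cU I$ is attained.

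The main technical obstacle is Proposition~\ref{prop:ito} itself. The absolutely continuous part $\partial_t u^d$ takes values in $H$ whereas $\partial\phi(u)$ is a priori only in $V^*$, so the $H$-pairing under the integral sign must be justified; the corrective trace term involves composing the operator-valued process $u^s(u^s)^*$ with the G\^ateaux differential $D_{\mathcal G}\partial\phi(u)$, which requires the uniform bound \eqref{ass:phi2} and the control \eqref{eq:control_L} to guarantee $L(u) \in \cL^1(V,V)$ with ${\rm Tr}_H\, L(u)$ integrable. Once these analytical issues and the integrability of the stochastic integral $\int_0^T (\partial\phi(u), u^s\,\d W)$ are settled, the present characterization reduces to the completion of the square carried out above.
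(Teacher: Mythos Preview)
Your proposal is correct and follows exactly the paper's route: derive the squared-residual representation of $I$ via the It\^o formula of Proposition~\ref{prop:ito} (this is precisely Proposition~\ref{prop:equiv}), then read off the equivalence and invoke well-posedness for the attainment of the minimum. The one point you skim over, which the paper makes explicit, is that before invoking Proposition~\ref{prop:ito} one must check its second hypothesis $\partial\phi(u)\in L^2_\cP(\Omega;L^2(0,T;H))$; this follows because $I(u)<\infty$ forces $\partial\phi(u)-F(\cdot,u)\in L^2_\cP(\Omega;L^2(0,T;H))$ and \eqref{ass:F} gives $F(\cdot,u)\in L^2_\cP(\Omega;L^2(0,T;H))$.
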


This characterization is proved in the next Section \ref{sec:proof},
by resorting to a generalized It\^o formula for $\phi$.   

\section{It\^o formula and proof of Theorem \ref{thm:main}}
\label{sec:proof}

In the deterministic case, the   Energy-Dissipation Principle hinges on the validity of the
chain rule for the  functional $\phi$.  In the stochastic case, this
corresponds to a specific It\^o formula, which we now present.

\begin{proposition}[It\^o formula]
  \label{prop:ito}
  Let $u \in \cV$ and assume 
  that $\partial \phi(u)\in L^2_\cP(\Omega;
  L^2(0,T;H))$. Then,
  \begin{align}
  \phi(u(t)) &=  \phi(u(0)) +  \int_0^t(\partial_t u^d,\partial
    \phi(u))\,\d s  + \int_0^t( \partial \phi(u),u^s\, \d W)
    \nonumber\\
& + 
   \frac12\int_0^t\operatorname{Tr}_H L(u) \,\d s  \quad 
  \forall t\in[0,T], \ \P\text{-a.s.}\label{eq:ito0}
  \end{align}
In particular, $ t \mapsto \E\phi(u(t))\in W^{1,1}(0,T)$ and
  \begin{align}
  &\frac{\d}{\d t}\E\phi(u(t)) =   \E  (\partial_t u^d,\partial
    \phi(u)) +
   \frac12\E \operatorname{Tr}_H\, L(u)  \quad 
   \text{for a.e.} \ t\in(0,T).\label{eq:ito}
  \end{align}
\end{proposition}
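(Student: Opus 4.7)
The plan is to derive \eqref{eq:ito0} by Moreau–Yosida regularization of $\phi$, applying a classical It\^o formula to the smoothed functional along $u$, and passing to the limit in the regularization parameter; identity \eqref{eq:ito} then follows by taking expectations.

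First, for each $\lambda>0$, I would consider the Moreau–Yosida regularization $\phi_\lambda(v):=\inf_{w\in H}\{\|v-w\|^2/(2\lambda)+\phi(w)\}$, which is convex on $H$ with Lipschitz Fr\'echet gradient $\nabla\phi_\lambda = \partial\phi\circ J_\lambda$, where $J_\lambda:=(I+\lambda\partial\phi)^{-1}$. Since $\nabla\phi_\lambda$ is only $C^{0,1}(H,H)$ and not a priori $C^2$, a further finite-dimensional smoothing is convenient: with $(e_k)\subset V$ an orthonormal basis of $H$ and $\pi_n$ the orthogonal projection onto $\mathrm{span}(e_1,\dots,e_n)$, the composition $\phi_\lambda\circ\pi_n$ is $C^2$ in a finite-dimensional setting, so the classical It\^o formula applies to the projected Ito process. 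Letting $n\to\infty$ using strong convergence $\pi_n u\to u$ in $H$ and the Lipschitz structure of $\nabla\phi_\lambda$, one arrives at
\begin{align*}
  \phi_\lambda(u(t)) = \phi_\lambda(u(0)) &+ \int_0^t\bigl(\nabla\phi_\lambda(u),\partial_t u^d\bigr)\,\ds + \int_0^t\bigl(\nabla\phi_\lambda(u),u^s\,\d W\bigr) \\
  &+ \frac12\int_0^t\operatorname{Tr}_H\bigl[u^s(u^s)^* D\nabla\phi_\lambda(u)\bigr]\,\ds,
\end{align*}
where the chain rule gives $D\nabla\phi_\lambda(v) = D_{\mathcal G}\partial\phi(J_\lambda v)(I+\lambda D_{\mathcal G}\partial\phi(J_\lambda v))^{-1}$, uniformly bounded in $\cL(V,V^*)$ thanks to \eqref{ass:phi2}.

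Next, I would pass $\lambda\to 0$. Standard Yosida theory together with $u(t)\in V$ a.s.\ and the coercivity \eqref{ass:phi} yields $J_\lambda u(t)\to u(t)$ in $V$ for a.e.\ $(\omega,t)$, whence $\nabla\phi_\lambda(u)\to\partial\phi(u)$ in $H$ and $D\nabla\phi_\lambda(u)\to D_{\mathcal G}\partial\phi(u)$ in $\cL_s(V,V^*)$ by the continuity hypotheses. Dominated convergence — using the bound $\|\nabla\phi_\lambda(u)\|\leq\|\partial\phi(u)\|\in L^2_\cP(\Omega;L^2(0,T;H))$ and \eqref{eq:control_L} to majorize the trace term — handles the two Lebesgue integrals, while the It\^o isometry combined with $u^s\in L^2_\cP(\Omega;L^2(0,T;\cL^2(U,H)))$ handles the stochastic integral, delivering \eqref{eq:ito0}. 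Identity \eqref{eq:ito} then follows by taking expectations: the stochastic integral is a genuine martingale under the assumed integrability and so vanishes in mean, the remaining terms lie in $L^1(\Omega\times(0,T))$ by Cauchy–Schwarz and \eqref{eq:control_L}, and the $W^{1,1}$ regularity of $t\mapsto\E\phi(u(t))$ follows.

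The main obstacle is the identification and convergence of the trace term as $\lambda\to 0$. The natural framework for the second Gateaux derivative is the duality $V\to V^*$, whereas $u^s$ takes values in $\cL^2(U,V)$ and the trace is computed against an $H$-orthonormal basis lying in $V$. Reconciling these relies crucially on the hypothesis $D_{\mathcal G}\partial\phi\in C(V;\cL_s(V,V^*))$ together with \eqref{ass:phi2} and the Hilbert–Schmidt regularity of $u^s$, which jointly ensure that $L(u)\in\cL^1(V,V)$ and that the trace pairings against $(e_k)$ pass to the limit by dominated convergence. Additional care is needed in the $n\to\infty$ step, where the second derivative of $\phi_\lambda$ exists only in a generalized sense and must be controlled through the explicit chain-rule structure of the Yosida regularization.
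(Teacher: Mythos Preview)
Your approach via Moreau--Yosida regularization and passage $\lambda\to 0$ is exactly the paper's; the paper simply imports the It\^o formula for $\phi_\lambda$ from \cite{biot} rather than re-deriving it through a finite-dimensional projection as you propose. One point needs more care, however: your assertion that $D\nabla\phi_\lambda(u)\to D_{\mathcal G}\partial\phi(u)$ in $\cL_s(V,V^*)$ is too strong as stated, because under the hypotheses the factor $D_{\mathcal G}J_\lambda(u)=(I+\lambda D_{\mathcal G}\partial\phi(J_\lambda u))^{-1}$ converges to the identity only in $\cL_w(V,V)$, not strongly. The paper treats the trace term termwise: it pairs the weak-$V$ convergence $D_{\mathcal G}J_\lambda(u)e_k\rightharpoonup e_k$ against the strong-$V^*$ convergence of $D_{\mathcal G}\partial\phi(J_\lambda u)\,u^s(u^s)^*e_k$ (the argument here being a fixed vector in $V$), and then rewrites the trace over a $U$-orthonormal basis to exhibit the $\lambda$-uniform $\ell^1$ majorant $(C_\phi/c_\phi)\|u^sv_k\|_V^2$ needed for dominated convergence. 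With this refinement your outline is complete and coincides with the paper's argument.
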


\begin{proof} The It\^o formula
  \eqref{eq:ito0} is proved in \cite[Lemma 3.2]{biot} for $\phi$ replaced by its   Moreau-Yosida
  approximation $\phi_\lambda$ at level $\lambda>0$
  \cite{brezis73}.  In particular,  for all $\lambda>0$ we have that 
\begin{align}
  \phi_\lambda(u(t)) &=  \phi_\lambda(u(0)) +  \int_0^t(\partial_t u^d,\partial
    \phi_\lambda(u))\,\d s  + \int_0^t( \partial \phi_\lambda(u),u^s\, \d W)
    \nonumber\\
& + 
   \frac12\int_0^t\operatorname{Tr}_H (u^s (u^s )^* D_{\mathcal G}
         \partial \phi_\lambda (u))\,\d s  \quad 
         \forall t \in [0,T], \ \P\text{-a.s.}\label{eq:ito0l}
\end{align}  In order to check for
\eqref{eq:ito0}, we hence aim at taking the limit $\lambda \to 0$ in
\eqref{eq:ito0l}. The pointwise convergence
of $\phi_\lambda$ to $\phi$ on $D(\phi)$ \cite[Prop.~2.11, p.~39]{brezis73} ensures that $\phi_\lambda(u(t)) \to
\phi(u(t))$ and $\phi_\lambda(u(0)) \to \phi(u(0))$. Moreover, from $\| \partial \phi_\lambda (u)\| \leq \| \partial \phi  (u)\| $
a.e. and the fact that $\partial \phi(u)\in L^2_\cP(\Omega;
  L^2(0,T;H))$ we conclude that $\partial \phi_\lambda(u) \weakto
  \xi $ in $L^2_\cP(\Omega;
  L^2(0,T;H))$  by  possibly extracting a not relabeled subsequence. On the other hand, one readily check that $\xi=\partial \phi(u)$
  a.e. by passing to the limit  $\lambda \to 0$ into
$(\partial\phi_\lambda (u),w-u) \leq\phi_\lambda(w) - \phi_\lambda(u)$
a.e. 
for all $w\in L^2_\cP(\Omega;
  L^2(0,T;H))$. Hence, extracting a subsequence  was actually not
  needed. Eventually, the first two integrands in the right-hand side of
  \eqref{eq:ito0} converge to the corresponding limits. 

  We are hence left to check the limit of the trace term. To this aim,
  we recall from \cite[Lemma 3.1]{biot} that
  $$D_{\mathcal G}
         \partial \phi_\lambda (u) = D_{\mathcal G}
         \partial \phi(J_\lambda (u)) \, D_{\mathcal G} J_\lambda (u)$$
         where we have denoted by $J_\lambda : V^* \to V$  the {\it resolvent} $J_\lambda :=
         (I + \lambda \partial \phi)^{-1}$. The  coercivity assumption
   \eqref{ass:phi} ensures that $J_\lambda$ is
         Lipschitz continuous. 
 Moreover, one can enhance the usual a.e. convergence $J_\lambda u \to u$ in $H$
   to
   \begin{align*}
     &c_\phi \| J_\lambda u - u \|^2_{L^2_\cP(\Omega;L^2(0,T;V))} \leq
   \E \int_0^T(\partial \phi(J_\lambda u) - \partial \phi(u),
       J_\lambda u - u) \, \d s  \to 0,
     \end{align*}
so that  $J_\lambda u \to u$ in $V$
   a.e.  Recalling that 
$D_{\mathcal G}
         \partial \phi \in C(V;\cL_s(V,V^*))$ one gets that
         $$ u^s (u^s)^*D_{\mathcal G}
         \partial \phi(J_\lambda (u))  \to u^s (u^s)^*D_{\mathcal G}
         \partial \phi(u)   \quad \text{in} \
         \cL^1_s(V,V)  \ \text{a.e.}$$
         On the other hand, from  \cite[Lemma 3.1]{biot}
         one has that $D_{\mathcal G} J_\lambda(h) \to I$ in
         $\cL_s(H,H)$ for all $h\in H$. In fact, under the coercivity assumption \eqref{ass:phi}, the argument of  \cite[Lemma 3.1]{biot} can be
        straightforwardly extended to ensure that the convergence
        $D_{\mathcal G} J_\lambda(v) \to I$ actually holds in 
        $\cL_w(V,V)$ for all $v \in V$, as well. In particular, for
        all $k\in \Nz$  we have that
        \begin{align*}
          &\big(u^s (u^s)^* D_{\mathcal G}
         \partial \phi(J_\lambda (u)) \, D_{\mathcal G} J_\lambda (u)
          e_k, e_k\big) = \big(D_{\mathcal G} J_\lambda (u) e_k, D_{\mathcal G}
            \partial \phi(J_\lambda (u))   u^s  (u^s)^* e_k \big) \\
          &\quad \to
          \big(  e_k, D_{\mathcal G}
         \partial \phi(u)   u^s  (u^s)^* e_k \big) = (L(u)e_k,e_k).
        \end{align*}
 In order to use the latter and pass to the limit in 
        \begin{equation}{\rm Tr}_H \,  (u^s (u^s)^* D_{\mathcal G}
         \partial \phi(J_\lambda (u)) \, D_{\mathcal G} J_\lambda
         (u)) = \sum_{k=1}^\infty
\big(u^s (u^s)^* D_{\mathcal G}
         \partial \phi(J_\lambda (u)) \, D_{\mathcal G} J_\lambda (u)
          e_k, e_k\big) 
          \label{serie}
          \end{equation}
we now provide a bound on the series, independently of $\lambda$. We
recall the invariance of the trace
        under permutations, namely, $$  {\rm Tr}_H \,  (u^s (u^s)^* D_{\mathcal G}
         \partial \phi(J_\lambda (u)) \, D_{\mathcal G} J_\lambda (u))
         =   {\rm Tr}_U\,( (u^s)^* D_{\mathcal G} 
         \partial \phi(J_\lambda (u)) \, D_{\mathcal G} J_\lambda (u)
         u^s),$$ where now ${\rm Tr}_U\,$ is the trace in $\cL^1(U,U)$, related to a
         given (hence any) orthonormal basis $(v_k)_{k\in \Nz}\subset U$. In particular, we
         have that 
         \begin{align*}
{\rm Tr}_H \,  (u^s (u^s)^* D_{\mathcal G}
         \partial \phi(J_\lambda (u)) \, D_{\mathcal G} J_\lambda (u))
           =   \sum_{i=1}^\infty\big(  D_{\mathcal G} 
         \partial \phi(J_\lambda (u)) \, D_{\mathcal G} J_\lambda (u)
           u^sv_k, u^sv_k \big)
         \end{align*}
       and we can argue as follows
       
         \begin{align*}
           &\left|\big(  D_{\mathcal G} 
         \partial \phi(J_\lambda (u)) \, D_{\mathcal G} J_\lambda (u)
             u^sv_k, u^sv_k \big)\right| \\
           &\quad\leq  \| D_{\mathcal G} 
         \partial \phi(J_\lambda (u)) \|_{\cL(V,V^*)} \| D_{\mathcal
             G} J_\lambda (u) \|_{\cL(V,V)}\| u^s v_k\|^2_V \\
           &\quad\leq
  \frac{C_{\phi}}{c_\phi}
            \| u^s v_k\|^2_V   \in\ell^1. 
         \end{align*} 
         By the Dominated Convergence Theorem we have hence proved that 
         
         \begin{align*}
             {\rm Tr}_H \, (u^s (u^s)^* D_{\mathcal G}
         \partial \phi(J_\lambda (u)) \, D_{\mathcal G} J_\lambda (u))
           \to    {\rm Tr}_H \, L(u)   \quad \text{a.e.} 
         \end{align*}
        as well as  
         \begin{align*}
           &| {\rm Tr}_H \, (u^s (u^s)^* D_{\mathcal G}
         \partial \phi(J_\lambda (u)) \, D_{\mathcal G} J_\lambda
             (u))|  \leq \frac{C_{\phi}}{c_\phi}  \|
           u^s\|^2_{\cL^2(U,V)} \quad \text{a.e.}
\end{align*}
      As $\|
           u^s\|^2_{\cL^2(U,V)} \in L^1_\cP(\Omega; L^1(0,T))$ one can
           use again the Dominated Convergence Theorem, pass the limit in \eqref{eq:ito0l} as
         $\lambda \to 0$, and get \eqref{eq:ito0}.

         We now localize formula \eqref{eq:ito0} to a subinterval
         $[s,t] \subset (0,T)$ in order to get that
$$ 
  \E\phi(u(t))  = \E \phi(u(s)) +  \E\int_s^t(\partial_t u^d,\partial
    \phi(u))\,\d s   +
   \frac12\E\int_s^t\operatorname{Tr}_H L(u) \,\d s  \quad 
  \forall\,0 <  s < t  <  T$$
This proves that    $t\mapsto \E\phi (u
         (t))$ is absolutely continuous, and the differential It\^o
formula~\eqref{eq:ito} follows  by the arbitrariness of $s$ and $t$.
\end{proof}
 
We now use Proposition \ref{prop:ito} in order to give an equivalent
formulation of the EDP functional $I$ in terms of squared residuals.

\begin{proposition}[Equivalent formulation] \label{prop:equiv}
  For $u \in \cV$ with $\partial \phi(u)\in
  L^2_{\cP}(\Omega;L^2(0,T;H))$   one has 
  \begin{align}
    &I(u) =  \frac12 \E \int_0^T \| \partial_t u^d +\partial \phi(u) -
    F(\cdot,u)\|^2 \, \ds+ 2C_\phi \E \int_0^T \| u^s -
    G(\cdot,u)\|^2_{\cL^2(U,V)} \, \ds\nonumber\\
    &\qquad + \E\| u(0) - u^0\|^2_V  . \label{eq:equiv}
  \end{align} 
\end{proposition}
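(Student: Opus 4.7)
The plan is to directly expand the square in the claimed expression \eqref{eq:equiv} and match it with the definition \eqref{eq:EDP}, using the It\^o formula from Proposition \ref{prop:ito} to convert the boundary-plus-trace terms into a time integral of $(\partial_t u^d, \partial\phi(u))$.

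First, I would expand the squared residual in $H$:
\begin{align*}
\tfrac12\|\partial_t u^d + \partial\phi(u) - F(\cdot,u)\|^2 &= \tfrac12\|\partial_t u^d\|^2 + \tfrac12\|\partial\phi(u) - F(\cdot,u)\|^2 \\
&\quad + (\partial_t u^d, \partial\phi(u)) - (\partial_t u^d, F(\cdot,u)).
\end{align*}
Taking expectation and integrating on $(0,T)$ shows that the right-hand side of \eqref{eq:equiv} coincides with \eqref{eq:EDP} provided one can identify
\begin{equation*}
\E\int_0^T (\partial_t u^d, \partial\phi(u))\,\ds \ =\ \E\phi(u(T)) - \E\phi(u(0)) - \tfrac12\E\int_0^T \operatorname{Tr}_H L(u)\,\ds.
\end{equation*}
The ``cross terms'' $\frac12\E\int_0^T\|\partial_t u^d\|^2\,\ds$, $\frac12\E\int_0^T\|\partial\phi(u)-F(\cdot,u)\|^2\,\ds$ and $-\E\int_0^T(\partial_t u^d,F(\cdot,u))\,\ds$ then match the corresponding terms of \eqref{eq:EDP} verbatim, and the penalty terms involving $u^s-G(\cdot,u)$ and $u(0)-u^0$ appear unchanged in both expressions.

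The remaining identity is precisely the integrated version of the differential It\^o formula \eqref{eq:ito}. Since the hypotheses $u\in\cV$ and $\partial\phi(u)\in L^2_\cP(\Omega;L^2(0,T;H))$ of Proposition \ref{prop:ito} are assumed, I would invoke \eqref{eq:ito}, which yields $t\mapsto\E\phi(u(t))\in W^{1,1}(0,T)$ together with
\begin{equation*}
\tfrac{\d}{\d t}\E\phi(u(t)) = \E(\partial_t u^d,\partial\phi(u)) + \tfrac12\E\operatorname{Tr}_H L(u) \quad\text{for a.e. } t\in(0,T).
\end{equation*}
Integrating this identity from $0$ to $T$ and rearranging gives exactly the required equality, and substitution into the expansion above yields \eqref{eq:equiv}.

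There is no genuine obstacle here: the result is a bookkeeping consequence of Proposition \ref{prop:ito}. The only point that needs care is ensuring that all integrals in the expansion are finite, so that terms can be freely rearranged; this follows from $u\in\cV$, the Lipschitz bound \eqref{ass:F} on $F$, the linear bound on $\partial\phi$ derived from \eqref{ass:phi2}, and the control \eqref{eq:control_L} on $\operatorname{Tr}_H L(u)$, all of which guarantee that each integrand lies in $L^1_\cP(\Omega;L^1(0,T))$.
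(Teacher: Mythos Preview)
Your proof is correct and follows essentially the same approach as the paper: both arguments amount to expanding the square $\tfrac12\|\partial_t u^d+\partial\phi(u)-F(\cdot,u)\|^2$ and invoking the It\^o formula of Proposition~\ref{prop:ito} to identify $\E\int_0^T(\partial_t u^d,\partial\phi(u))\,\ds$ with $\E\phi(u(T))-\E\phi(u(0))-\tfrac12\E\int_0^T\operatorname{Tr}_H L(u)\,\ds$. The only cosmetic difference is the direction---the paper starts from the definition \eqref{eq:EDP} and collapses to \eqref{eq:equiv}, whereas you start from \eqref{eq:equiv} and expand back to \eqref{eq:EDP}---and the paper cites the integral formula \eqref{eq:ito0} (after taking expectation) rather than integrating the differential formula \eqref{eq:ito}, which is of course equivalent.
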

\begin{proof}
  Under the assumptions  $u \in \cV$ and $\partial \phi(u)\in
  L^2_{\cP}(\Omega;L^2(0,T;H))$  the It\^o formula
  \eqref{eq:ito0} holds and we can compute
   \begin{align*}
  I(u) &= \E\phi(u(T)) - \E \phi(u(0)) + \frac12 \E \int_0^T \|
  \partial_t u^d\|^2 \, \ds
+ \frac12 \E \int_0^T \|
   F(\cdot,u) -  \partial
    \phi (u) \|^2 \, \ds\nonumber \\
  &\qquad  - \frac12 \E \int_0^T {\rm Tr}_H\,L(u)  \, \ds- \E \int_0^T
  (\partial_t u^d, F(\cdot,u)) \, \ds\nonumber \\
  &\qquad+
    2C_\phi\E\int_0^T \| u^s  - G(\cdot,u)\|^2_{\cL^2(U,H)}  \, \ds
    +\E \| u(0)-u^0\|^2_V  
    \nonumber \\
  &\stackrel{\eqref{eq:ito0}}{=} \frac12 \E \int_0^T \|
  \partial_t u^d \|^2 \, \ds+ \frac12 \E \int_0^T \|
   F(\cdot,u) -  \partial
    \phi(u) \|^2 \, \ds\nonumber \\
   &\qquad + \E \int_0^T (\partial_t u^d, \partial
    \phi(u)-F(\cdot,u) ) \, \ds+
   2C_\phi\E\int_0^T \| u^s  - G(\cdot,u)\|^2_{\cL^2(U,V)}  \, \ds\nonumber \\
       &\qquad +\E \| u(0)-u^0\|^2_V 
         \nonumber \\
  &= \frac12 \E \int_0^T \| \partial_t u^d + \partial
    \phi(u)-F(\cdot,u)\|^2 \, \ds
    +2C_\phi\E \int_0^T \| u^s - G(\cdot,u)\|^2_{\cL^2(U,V)} \, \ds\nonumber \\
       &\qquad +\E \| u(0)-u^0\|^2_V.
         \qedhere
\end{align*}
\end{proof}

Owing to the equivalence from Proposition \ref{prop:equiv} we are now
in the position of checking the characterization Theorem \ref{thm:main}.

\begin{proof}[Proof of Theorem \ref{thm:main}]
Let $u \in \cU$ be such that $I(u)=0$. The boundedness
of $I$ in particular entails that $u \in \cV$ and that  the difference   $ \partial\phi(u)-F(\cdot,u)$ belongs to
$L^2_\cP(\Omega;L^2(0,T;H))$. As $u\in  
L^2_\cP(\Omega;L^2(0,T;H))$ one has  $F(\cdot,u)\in
L^2_\cP(\Omega;L^2(0,T;H))$ owing to \eqref{ass:F}. This implies that $\partial \phi(u)$ is in
$L^2_\cP(\Omega;L^2(0,T;H))$. We can hence  use equation  \eqref{eq:equiv} and obtain that
  \begin{align*}
    & \frac12 \E \int_0^T \| \partial_t u^d +\partial \phi(u) -
    F(\cdot,u)\|^2 \,\ds+ 2C_\phi \E \int_0^T \| u^s -
    G(\cdot,u)\|^2_{\cL^2(U,V)}\,\ds \\
    &\quad + \E\| u(0) - u^0\|^2_V 
      \stackrel{\eqref{eq:equiv}}{=} I(u)=0.
  \end{align*}
  This proves that $\partial_t u^d + \partial \phi(u) = F(\cdot,u)$
  and $u^s=G(\cdot,u)$ a.e. in $\Omega\times (0,T)$ and $u(0) =u^0$ $\P$-a.s. Hence, $u$ solves
  equation \eqref{eq:00}.

  Let now $u\in \cV$ solve equation \eqref{eq:00}. In particular, we
  have that $\partial_t u^d + \partial \phi(u) = F(\cdot,u)$
  and $u^s=G(\cdot,u)$ a.e. in $\Omega\times (0,T)$ and $u(0) =u^0$ $\P$-a.s. As $\partial_t u^d
 \in  L^2_\cP (\Omega;L^2(0,T;H))$ and $F(\cdot, u)  \in  L^2_\cP
 (\Omega;L^2(0,T;H))$ from \eqref{ass:F}, we have that 
 $\partial \phi
  ( u)  \in  L^2_\cP (\Omega;L^2(0,T;H))$ as well. Again, the
  equivalence \eqref{eq:equiv} holds and we have that 
\begin{align*}
  I(u) 
  &\stackrel{\eqref{eq:equiv}}{=}  
   \frac12 E \int_0^T \| \partial_t u^d + \partial
    \phi(u)-F(\cdot,u)\|^2 \, \ds 
    +2C_\phi\E \int_0^T \| u^s - G(\cdot,u)\|^2_{\cL^2(U,V)}\, \ds  \nonumber \\
  &\quad +\E \| u(0)-u^0\|^2_V=0 .
\end{align*}
This concludes the proof of the theorem. 
\end{proof}

\begin{remark}\label{w10}\rm
  The equivalence \eqref{eq:equiv} reveals that the EDP functional is indeed
  nothing but the square residual of the system
$$\partial_t u^d + \partial \phi(u) = F(\cdot, u), \quad u^s =
G(\cdot,u^s),\quad  u(0)=u^0. $$
In particular, the expression in
\eqref{eq:equiv} could have been used as alternative and possibly more
informative definition for $I$. On the other hand, definition
\eqref{eq:EDP} is the direct stochastic extension of the classical one
\cite{Ambrosio,Dondl19} and has the advantage of making sense also
out of the purely Hilbertian setting. Without going into the greatest generality,
which would call for considering stochastic integrals in Banach
spaces, let us mention that the present results (in particular, the
validity of Theorem \ref{thm:main}) could be extended to the EDP
functional
\begin{align}
 I (u) &= 
          \E\phi(u(T)) - \E \phi(u(0)) +    \E \int_0^T\psi_A(\partial_t u^d,
    -\partial\phi(u)+F(\cdot,u) ) \, \ds  \nonumber \\
  &\quad - \E \int_0^T
  (\partial_t u^d, F(\cdot,u))\, \ds  -  \E \int_0^T {\rm
    Tr}_H\, L(u)\, \ds 
   \nonumber \\
  &\quad  + 2C_\phi\E\int_0^T \| u^s -
    G(\cdot,u)\|^2_{\cL^2(U,V)} \, \ds +\E\|u(0)- u^0\|^2_V .
\label{eq:EDP2}
\end{align}
 Here, $\psi_A: H \times H \to (-\infty,\infty]$ is a convex
function {\it representing} the maximal monotone operator $A:H \to H$
in the sense of the Fitzpatrick theory \cite{Fitzpatrick}, see also
\cite{Visintin1,Visintin2} for
additional material and details. In particular,
\begin{align}
  & \psi_A(v,w) \geq (v,w) \quad \forall v,\, w \in H, \label{eq:fenchel1}\\
   & \psi_A(v,w) = (v,w) \quad  \Leftrightarrow \quad w \in A(v).\label{eq:fenchel2}
\end{align}
An example for such $\psi_A$ is the so-called {\it Fitzpatrick
  function}
$$\psi_A(v,w)  = \sup \{ (\hat v,w) + (v,\hat w) - (\hat v, \hat w)\,
: \, \hat v,\,\hat w \in H, \, \hat w \in A(\hat v)\}.$$
If $A$ is cyclic, namely $A= \partial \eta$ for some $\eta: H \to
(-\infty,\infty]$ convex, proper, and lower semicontinuous, a second
example for $\psi_A$ is the  {\it Fenchel
  function}
$$\psi_A(v,w) = \eta(v) + \eta^*(w)$$
where $\eta^*$ is the Legendre conjugate of $\eta$.

By using \eqref{eq:fenchel1}-\eqref{eq:fenchel2} one can prove that
null-minimizers of $I$ from \eqref{eq:EDP2} solve the 
{\it doubly nonlinear}
equation
$$A(\partial_t u^d)\, \d t + u^s \, \d W + \partial\phi
(u)\, \d t  = F(\cdot, u)\, \d t  + G(\cdot, u) \, \d W$$
where $A: H \to H$ is maximal monotone, nondegenerate, and linearly
bounded but not necessarily cyclic.
The latter,  under suitable assumptions, has been proved to admit martingale solutions in~\cite{biot}.

\end{remark}

\begin{remark}\label{w11}\rm
 By adapting the argument of Proposition \ref{prop:ito} one can check
 that strong-in-time solutions with $u^d\in L^2(\Omega; W^{1,1}(0,T;H))$
 of equation \eqref{eq:00} are actually in $\cV$, so that assuming $u\in \cV$
 is actually not restrictive. Indeed, given $u^d\in L^2(\Omega; W^{1,1}(0,T;H))$
 and taking \eqref{ass:F}--\eqref{ass:G} into account 
 one has that $\partial \phi(u)\in L^2_\cP(\Omega; L^1(0,T;H))$,
 $F(\cdot, u)\in  L^2_\cP(\Omega; L^2(0,T;H))$, and $G(\cdot,u) \in
 L^2_\cP(\Omega; L^2(0,T;\cL^2(U,V)))$. In order to conclude for $u\in
 \cU$ it hence suffices to prove that indeed $\partial \phi(u)\in
 L^2_\cP(\Omega; L^2(0,T;H))$. At some
 approximation level (for instance, that of Yosida approximations $u_\lambda$ from
 the proof of Proposition \ref{prop:ito}), the It\^o
 formula holds for $\partial \phi(u)$ in $L^2_\cP(\Omega;
 L^1(0,T;H))$, as well. On solutions of the equation \eqref{eq:00}
 one can hence replace $\partial_t u^d$ in the It\^o formula
 \eqref{eq:ito} by $F(\cdot,u)-\partial\phi(u)$ and easily check that
 indeed $\partial \phi(u)\in
 L^2_\cP(\Omega; L^2(0,T;H))$.  In fact,  by a standard application
 of the Burkholder-Davis-Gundy inequality one additionally obtains
 that solutions of  \eqref{eq:00}  belong to 
 $L^2_\cP(\Omega;L^\infty(0,T;V))$, as well.  
\end{remark}

\section{ Some properties of the EDP functional}
\label{sec:coercivity}

As mentioned above, under the assumptions of Section \ref{sec:setting}
equation \eqref{eq:00} admits a unique solution $u$. In particular, the
null-minimization problem for $I$ is uniquely solvable.

 In this section, we comment on the possibility of tackling the
null-minimization problem for $I$ directly. We prove that $I$ is coercive
in $\cV$ (Proposition \ref{prop:coercivity}) and that minimizers are actually
null-minimizers (Proposition \ref{prop:null}). Moreover, we check that
$I$ is lower semicontinuous, up to possibly changing the underlying
stochastic basis (Proposition \ref{prop:lsc}). Unfortunately, this
lower semicontinuity property is 
too weak to allow for an application of the Direct Method, preventing
us from obtaining a complete alternative existence proof for
\eqref{eq:00}.

The case of an atomic $\Omega$ is special. Here, no change in
the stochastic basis is needed for lower semicontinuity and the
null-minimization of $I$ can be directly carried out, bringing to a
fully variational existence proof for \eqref{eq:00}.

\begin{proposition}[Coercivity]\label{prop:coercivity}
  The sublevels of $I$ are bounded in $\cV$.
\end{proposition}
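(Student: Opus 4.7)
The plan is the following. A bound $I(u) \leq M$ automatically forces $u \in \cV$, since $I \equiv \infty$ outside $\cV$ by definition. Inspecting the summands of \eqref{eq:EDP} one by one using the regularity of $u \in \cV$ together with \eqref{ass:F}, \eqref{ass:G0}, \eqref{eq:linearly}, and the trace control \eqref{eq:control_L}, every term except possibly $\tfrac12 \E\int_0^T \|\partial\phi(u) - F(\cdot,u)\|^2\,\ds$ is automatically finite, so this remaining residual term is finite as well. Combined with $F(\cdot,u) \in L^2_\cP(\Omega; L^2(0,T;H))$ from \eqref{ass:F}, this yields $\partial\phi(u) \in L^2_\cP(\Omega; L^2(0,T;H))$ and Proposition \ref{prop:equiv} applies. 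The bound $I(u) \leq M$ then translates into explicit $L^2$-estimates of order $M$ on the three residuals
$$\eta_1 := \partial_t u^d + \partial\phi(u) - F(\cdot,u), \qquad \eta_2 := u^s - G(\cdot,u), \qquad \eta_3 := u(0) - u^0,$$
so that $u$ satisfies the equation \eqref{eq:00} up to these controlled perturbations; in particular $\E\|u(0)\|_V^2 \leq 2M + 2\E\|u^0\|_V^2$.

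Next, I apply the It\^o formula \eqref{eq:ito0} and substitute $\partial_t u^d = F(\cdot,u) - \partial\phi(u) + \eta_1$ into the integrand $(\partial_t u^d, \partial\phi(u))$. This produces the key negative contribution $-\E\int_0^t \|\partial\phi(u)\|^2\,\ds$, which via Young's inequality with coefficient $1/4$ absorbs the two cross terms $(F(\cdot,u), \partial\phi(u))$ and $(\eta_1, \partial\phi(u))$, leaving $\|F(\cdot,u)\|^2 + \|\eta_1\|^2 - \tfrac12 \|\partial\phi(u)\|^2$. Controlling the trace term by \eqref{eq:control_L} together with $\|u^s\|^2_{\cL^2(U,V)} \leq 2\|\eta_2\|^2_{\cL^2(U,V)} + 4c_{G,2}^2(1+\|u\|_V^2)$ from \eqref{ass:G0}, and bounding $\|F(\cdot,u)\|^2 \leq 2c_F^2\|u\|^2 + 2\|F(\cdot,0)\|^2$ from \eqref{ass:F}, I arrive at
$$\E\phi(u(t)) + \tfrac12 \E\int_0^t \|\partial\phi(u)\|^2\,\ds \leq C_1(M) + C_2 \int_0^t \E\|u(s)\|_V^2\,\ds.$$
Using the lower bound $\phi(u(t)) \geq \tfrac{c_\phi}{2}\|u(t)\|_V^2$ from \eqref{eq:linearly}, Gronwall's lemma delivers $\sup_{t\in[0,T]} \E\|u(t)\|_V^2 \leq C_3(M)$, which in turn gives $\E\int_0^T \|\partial\phi(u)\|^2\,\ds \leq C_4(M)$.

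From this pointwise $V$-bound the remaining $\cV$-norms follow at once: $u^s$ is bounded in $L^2_\cP(\Omega; L^2(0,T; \cL^2(U,V)))$ via the same decomposition $\|u^s\|^2 \leq 2\|\eta_2\|^2 + 4c_{G,2}^2(1+\|u\|_V^2)$, and $\partial_t u^d = F(\cdot,u) - \partial\phi(u) + \eta_1$ is bounded in $L^2_\cP(\Omega; L^2(0,T;H))$ by the triangle inequality, using the newly obtained bound on $\partial\phi(u)$; the $L^2$-bound on $u^d$ itself comes from $u^d(t) = u(t) - \int_0^t u^s\,\d W$ via It\^o isometry and the embedding $V \hookrightarrow H$. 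The main obstacle in the argument is the Gronwall setup: the original expression \eqref{eq:EDP} for $I$ features the mixed term $-(\partial_t u^d, F(\cdot,u))$ and the trace term with no a priori useful sign; it is the equivalent residual formulation from Proposition \ref{prop:equiv} that exposes these as pieces of squared residuals, after which the $-\|\partial\phi(u)\|^2$ contribution arising in the It\^o expansion supplies precisely the cancellation required to close the estimate.
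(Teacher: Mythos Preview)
Your proof is correct and reaches the same conclusion via Gronwall, but the organization differs from the paper's. The paper localizes the equivalent residual identity \eqref{eq:equiv} to the interval $[0,t]$ (using the It\^o formula on that subinterval) and keeps $\tfrac14\E\int_0^t\|\partial_t v^d\|^2$ together with $\tfrac{3C_\phi}{2}\E\int_0^T\|v^s\|^2_{\cL^2(U,V)}$ as the coercive terms on the left, absorbing the cross term $(F,\partial_t v^d)$ and the trace directly. You instead fix the residuals $\eta_1,\eta_2,\eta_3$ once and for all from the global identity, then rewrite $\partial_t u^d = F-\partial\phi(u)+\eta_1$ inside the It\^o formula so that $-\E\int_0^t\|\partial\phi(u)\|^2$ becomes the coercive term. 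Your route is the ``perturbed a~priori estimate'' one would run on the SPDE itself and yields, as a by-product, an explicit $L^2$ bound on $\partial\phi(u)$; the paper's route stays closer to the EDP structure and delivers the bounds on $\partial_t u^d$ and $u^s$ in one shot, without the post-processing step you perform at the end. Both are equally valid, and the extra $\partial\phi(u)$ control you obtain is a minor advantage.
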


\begin{proof} Assume  $I(v) <\infty$. Then $v \in
  \cV \subset L^2_\cP(\Omega;L^2(0,T;H))$ and we have
   $F(\cdot,v) \in  L^2_\cP(\Omega;L^2(0,T;H))$ and
  $G(\cdot, v) \in  L^2_\cP(\Omega;L^2(0,T;\cL^2(U;V)))$ from \eqref{ass:F}-\eqref{ass:G}. As
  $I(v)$ is finite, we deduce that    $\partial\phi(v) \in
  L^2_\cP(\Omega;L^2(0,T;H))$ as well. 
  Applying the It\^o formula
      \eqref{eq:ito} and integrating
      on the interval $[0,t]$ for $t \in [0,T]$ we deduce that
      \begin{align}
         &\E \phi(v(t)) - \E \phi(v(0)) + \frac12 \E \int_0^t \|
        \partial_t v^d\|^2\, \d s  +\frac12 \E \int_0^t \|
           \partial\phi(v) - F(\cdot,v)\|^2 \, \d s\nonumber\\
        &\quad - \E
        \int_0^t(F(\cdot, v), \partial_t v^d)\, \d s - \frac12 \E
        \int_0^t {\rm Tr}_H\, L(v) \, \d s+ 2C_\phi\E
      \int_0^T\|v^s  - G (\cdot,v)\|^2_{\cL^2(U,V)}\,
                     \d s \nonumber\\
        &\quad = \frac12 \E \int_0^t \| \partial_t v^d +
        \partial\phi(v) - F(\cdot,v)\|^2\, \d s+ 2C_\phi\E
      \int_0^t\|v^s  - G (\cdot,v)\|^2_{\cL^2(U,V)}\,
                     \d s  \nonumber\\
        &\quad \leq I(v)\label{eq:coerc}.
        \end{align}  
We now use the coercivity  \eqref{eq:linearly}  in order to get that
 \begin{align}
         &\frac{c_\phi}2
          \E\| v(t)\|_V^p + \frac14 \E \int_0^t \|
        \partial_t v^d\|^2 \, \d s +\frac12 \E \int_0^t \|
           \partial\phi(v) - F(\cdot,v)\|^2  \, \d s \nonumber\\
   &\qquad+ \frac{3C_\phi}{2}\E
      \int_0^T\|v^s\|^2_{\cL^2(U,V)}\,
                     \d s \nonumber\\
   &\quad
    \leq   \E \phi(v(0)) + \E
        \int_0^t\|F(\cdot, v)\|^2 \, \d s+ \frac12 \E
        \int_0^t {\rm Tr}_H\, L(v)   \, \d s \nonumber\\
   &\qquad+8C_\phi\E
        \int_0^t\|G(\cdot, v)\|^2_{\cL^2(U,V)} \, \d s+ I(v). \nonumber
        \end{align}
By the Lipschitz continuity of $F$ and
 the linear boundedness of $G$ from \eqref{ass:F}--\eqref{ass:G0} 
      and the  bounds \eqref{eq:linearly}-\eqref{eq:control_L}
       we get
      \begin{align}
         & \frac{c_\phi}{2}  \E\| v(t)\|_V^2 + \frac14 \E \int_0^t \|
        \partial_t v^d\|^2  \,\d s +\frac12 \E \int_0^t \|
           \partial\phi(v) - F(\cdot,v)\|^2\,\d s  \nonumber\\
   &\qquad+  \frac{3C_\phi}{2}\E
      \int_0^T\|v^s\|^2_{\cL^2(U,V)}\,
                     \d s
    \nonumber\\
   &\quad\leq    \frac{C_\phi}{2}\E \|v(0)\|_V^2  + C         
        \int_0^t\E\|v\|_V^2\,\d s
         + \frac{C_\phi}{2}
        \int_0^t\E \| v^s\|_{\cL^2(U,V)}^2\,\d s + C+ I(v). \nonumber
      \end{align}
      for some positive constant $C$, depending on   the data
      $c_\phi$,  $C_\phi$, $c_F$,
      $c_G$, 
      $c_{G,2}$,  $ \|u^0\|_V$,  and 
        $\| F(\cdot, 0)\|_{L^2(0,T;H)}$, but
      independent of $v$.  An application of the Gronwall Lemma ensures that
$$ \max_{[0,T]}\, \E \| v \|^2_V + \E \int_0^T \|
\partial_t v^d\|^2\, \d s+ \E
      \int_0^T\|v^s\|^2_{\cL^2(U,V)}\,
                     \d s \leq C (1 + I(v)),$$
possibly by updating the constant. The assertion follows.
\end{proof}

In order to discuss lower limits, we   make the notation for the EDP functional more precise  by explicitly indicating the background
stochastic structure  and the given initial value.  When needed in the following, we use
the extended notation $$u  \mapsto  \hat I (  \Omega,
\cF,(\cF_t)_{t\in[0,T]} ,  \P,  W, u^0 ,   u)$$ instead of  $u  \mapsto
I(u)$. Correspondingly, we specify the dependence on the stochastic
basis of
the space of It\^o processes by using the notation $\hat\cU(  \Omega,
\cF,(\cF_t)_{t\in[0,T]} ,  \P,  W)$.
Our lower-semicontinuity result reads as follows. 
 
      \begin{proposition}[$\liminf$  tool]\label{prop:lsc}
     For all $u_\eps \wstarto u$ in $\cV$ one can find  a stochastic basis $(\hat  \Omega, \hat
        \cF,(\hat  \cF_t)_{t\in[0,T]} , \hat   \P)$, a  not
        relabeled  sequence of
        measurable maps $\eta_\eps: (\hat \Omega, \hat \cF) \to
        (\Omega, \cF)$ with $\P \circ \eta_\eps = \hat \P$, a cylindrical Wiener
        process $\hat W$ on $U$, a process $$\hat u \in \hat \cU (\hat  \Omega, \hat
        \cF,(\hat  \cF_t)_{t\in[0,T]} , \hat   \P, \hat W) \cap
        C([0,T];L^2(\hat \Omega,\hat \cF;V)),$$  and an initial
        value $\hat u^0 \in L^2(\hat \Omega, \hat \cF_0;V)$  such that
        $u_\epsi \circ \eta_\epsi \to \hat u$ in $C([0,T];H)$ a.e.~in $\hat \Omega$ and
        \begin{equation}
          \hat I(\hat  \Omega, \hat
        \cF,(\hat  \cF_t)_{t\in[0,T]} , \hat   \P , \hat W,  \hat u^0,\hat
        u)\leq \liminf_{\eps\to 0}
        I(u_\eps).\label{eq:liminf}
        \end{equation}
\end{proposition}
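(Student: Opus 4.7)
The plan is to combine a Skorokhod-type representation argument with the weak lower semicontinuity of the three squared-norm terms furnished by the equivalent formulation \eqref{eq:equiv} of Proposition \ref{prop:equiv}.

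First, I would assume without loss of generality that $\liminf_{\eps\to 0}I(u_\eps)<\infty$ and extract a not relabeled subsequence realizing the $\liminf$. By coercivity (Proposition \ref{prop:coercivity}), $(u_\eps)$ is bounded in $\cV$, and since $I(u_\eps)$ is finite, assumption \eqref{ass:F} together with the identity $\partial\phi(u_\eps)=z_\eps-\partial_t u_\eps^d+F(\cdot,u_\eps)$ shows that $\partial\phi(u_\eps)$ is bounded in $L^2_\cP(\Omega;L^2(0,T;H))$. Proposition \ref{prop:equiv} then applies to each $u_\eps$, giving
\begin{equation*}
I(u_\eps)=\tfrac12\E\!\int_0^T\!\|z_\eps\|^2\ds+2C_\phi\E\!\int_0^T\!\|y_\eps\|^2_{\cL^2(U,V)}\ds+\E\|u_\eps(0)-u^0\|^2_V,
\end{equation*}
with $z_\eps:=\partial_t u_\eps^d+\partial\phi(u_\eps)-F(\cdot,u_\eps)$ and $y_\eps:=u_\eps^s-G(\cdot,u_\eps)$ both bounded in their natural $L^2_\cP$-spaces.

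Next, I would establish tightness of the joint law of $(u_\eps^d,u_\eps^s\cdot W,u_\eps^s,\partial\phi(u_\eps),W,u_\eps(0))$ on a suitable product path space. Aubin--Lions, via the compact embedding $V\embed H$, yields tightness of $(u_\eps^d)$ in $C([0,T];H)$, while Burkholder--Davis--Gundy applied to the $V$-valued stochastic convolution delivers tightness of $u_\eps^s\cdot W$ in $C([0,T];V)$. Components bounded only in reflexive $L^2_\cP$-spaces are handled through the weak topology via Jakubowski's extension of Skorokhod's theorem. Applying the latter produces the new stochastic basis $(\hat\Omega,\hat\cF,(\hat\cF_t)_{t\in[0,T]},\hat\P)$, measure-preserving maps $\eta_\eps$, a cylindrical Wiener process $\hat W$ on $U$ (supplied by standard martingale representation on the enlarged filtration), and limit processes such that $\hat u_\eps:=u_\eps\circ\eta_\eps\to \hat u$ a.s.\ in $C([0,T];H)$ with $\hat u=\hat u^d+\hat u^s\cdot\hat W\in\hat\cU(\hat\Omega,\hat\cF,(\hat\cF_t)_{t\in[0,T]},\hat\P,\hat W)$, together with the matching weak convergences of $\hat u_\eps^d,\hat u_\eps^s,\partial\phi(\hat u_\eps),u_\eps(0)\circ\eta_\eps$ in the respective reflexive target spaces.

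Then I would identify the weak limits. The a.s.\ convergence $\hat u_\eps\to\hat u$ in $C([0,T];H)$, combined with \eqref{ass:F}--\eqref{ass:G} and Dominated Convergence, gives $F(\cdot,\hat u_\eps)\to F(\cdot,\hat u)$ and $G(\cdot,\hat u_\eps)\to G(\cdot,\hat u)$ strongly in the appropriate $L^2_\cP$-spaces, so that $\hat y_\eps\weakto \hat u^s-G(\cdot,\hat u)$ immediately. For $\hat z_\eps$ it remains to prove that the weak $L^2_\cP$-limit $\xi$ of $\partial\phi(\hat u_\eps)$ coincides with $\partial\phi(\hat u)$; I would carry this out by a Minty-type argument, passing to the limit in the subdifferential inequality $(\partial\phi(\hat u_\eps),w-\hat u_\eps)\le\phi(w)-\phi(\hat u_\eps)$ against an arbitrary $w\in L^2_\cP(\hat\Omega;L^2(0,T;H))$, using the a.s.\ strong convergence $\hat u_\eps\to\hat u$ in $L^2(0,T;H)$ and the lsc of $\phi$, in the same spirit as the Yosida passage of Proposition \ref{prop:ito}.

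Finally, weak lower semicontinuity of the three squared $L^2$-norms in \eqref{eq:equiv} delivers
\begin{equation*}
\tfrac12\E\!\int_0^T\!\|\hat z\|^2\ds+2C_\phi\E\!\int_0^T\!\|\hat y\|^2_{\cL^2(U,V)}\ds+\E\|\hat u(0)-\hat u^0\|^2_V\le\liminf_{\eps}I(u_\eps),
\end{equation*}
with $\hat z:=\partial_t\hat u^d+\partial\phi(\hat u)-F(\cdot,\hat u)$ and $\hat y:=\hat u^s-G(\cdot,\hat u)$; applying Proposition \ref{prop:equiv} in the new basis to $\hat u$ rewrites the left-hand side as $\hat I(\hat\Omega,\hat\cF,(\hat\cF_t)_{t\in[0,T]},\hat\P,\hat W,\hat u^0,\hat u)$, proving \eqref{eq:liminf}. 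The hard part is the Minty-type identification $\xi=\partial\phi(\hat u)$, which critically uses the a.s.\ strong convergence in $L^2(0,T;H)$ from the Skorokhod step together with the lsc of $\phi$; a secondary delicate point is the construction of $\hat W$ on the enlarged filtration and the verification that the transported integral $\hat u^s\cdot\hat W$ is the genuine stochastic integral of $\hat u^s$ against $\hat W$, which requires a careful martingale representation argument.
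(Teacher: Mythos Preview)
Your overall strategy coincides with the paper's: Skorokhod-type representation on a new basis, followed by weak lower semicontinuity of the three squared terms in the residual form \eqref{eq:equiv}. The identification $\xi=\partial\phi(\hat u)$ via strong $L^2(0,T;H)$ convergence of $\hat u_\eps$ combined with weak convergence of $\partial\phi(\hat u_\eps)$ is also exactly the paper's mechanism.

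Two comments on the details. First, where you invoke Jakubowski's extension to carry the weak-topology components ($u^s_\eps$, $\partial\phi(u_\eps)$) through the representation step directly, the paper instead applies the classical Skorohod theorem (Ikeda--Watanabe) with tightness in strong Polish topologies only---$u_\eps$ in $L^2(0,T;H)$, $u^d_\eps$ in $C([0,T];V^*)$, and $\iota_\eps=\int_0^\cdot u^s_\eps\,\d W$ in $L^2(0,T;H)$ via the Flandoli--Gatarek $H^\mu(0,T;V)$ bound and Aubin--Lions---and then recovers the weak convergences of $\hat u^s_\eps$, $\partial\phi(\hat u_\eps)$, and $\hat u_\eps(0)$ a posteriori from the fact that the maps $\eta_\eps$ preserve laws. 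Your route via Jakubowski is a reasonable and arguably cleaner alternative; both approaches lead to the same final $\liminf$ inequality.

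Second, your claim that BDG ``delivers tightness of $u^s_\eps\cdot W$ in $C([0,T];V)$'' is not correct as stated: BDG yields moment bounds, but there is no spatial compactness available in $V$ since the integrands take values only in $\cL^2(U,V)$. The right target is $C([0,T];H)$ or $L^2(0,T;H)$, obtained by combining the time regularity of the stochastic integral with the compact embedding $V\embed H$; this is what the paper does and is all that the statement requires. With this correction your argument goes through and matches the paper's.
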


\begin{proof}
%
   As 
   $(u_\eps)_{\eps>0}$ is
   bounded in $\cV$, 
the classical result \cite[Lemma
  2.1]{fland-gat} ensures that
  $$\iota_\epsi:=\int_0^\cdot u^s_\eps \, \d W$$
  is uniformly bounded in 
   $L^2_\cP(\Omega;H^\mu(0,T;V))$  for some
  $\mu \in (0,1/2)$.
Since $V$ is compact in $H$, the Aubin-Lions  Lemma \cite{Simon}
 ensures that
  
  \begin{align*}
   &H^\mu(0,T; V)\subset\subset L^2(0,T; H), \\
   &H^1(0,T;H) \subset \subset C([0,T];V^*),\\
   & L^2(0,T;V)  \cap\left(H^1(0,T;H) + H^\mu(0,T;V)\right)\subset\subset L^2(0,T; H).
  \end{align*} 
  This
  entails that the laws of $(u_\eps,u^0, u_\eps^d, \iota_\eps,W)$ are tight in
  \[
  L^2(0,T; H)\times V\times C([0,T]; V^*)\times L^2(0,T; H) \times C([0,T];U_1),
   \] 
   where $U_1$ is a separable Hilbert space 
   such that the inclusion $U\embed U_1$ is Hilbert-Schmidt.
    By the Skorohod Theorem \cite[Thm. 2.7]{ike-wata} one can hence
  find another probability space $(\hat \Omega, \hat \cF, \hat \P)$, a
  sequence of measurable maps $\eta_\eps: (\hat \Omega, \hat \cF) \to
   (\Omega, \cF)$ with $\P \circ \eta_\epsi = \hat \P$  for all $\eps>0$, 
   and some measurable
   
   \[
   (\hat u, \hat u^0,\hat u^d, \hat \iota, \hat W): (\hat \Omega, \hat \cF) \to
   L^2(0,T; H)\times V\times C([0,T]; V^*)\times L^2(0,T; H) \times C([0,T];U_1),
   \]
   
   such that, letting 
   $\hat u_\eps := u_\eps \circ \eta_\eps$,
    $\hat u_\eps^d := u_\eps^d \circ \eta_\eps$, 
    $\hat \iota_\eps := \iota_\eps \circ \eta_\eps$, and
  $\hat W_\eps := W_\eps \circ \eta_\eps$,
  
  \begin{align} 
 \hat u_\eps \to \hat u \quad& \text{in} \  L^2(0,T;H), \
                              \hat\P\text{-a.s.},  \label{conv:1}\\
 \hat u_\eps(0) \to \hat u^0\quad& \text{in} \  V, \
                              \hat\P\text{-a.s.},  \label{conv:1_bis}\\
 \hat u_\eps^d \to \hat u^d \quad& \text{in} \  C([0,T];V^*), \
                              \hat\P\text{-a.s.},  \label{conv:1}\\
    \hat \iota_\eps \to \hat \iota \quad& \text{in} \ L^2(0,T;H), \
                              \hat\P\text{-a.s.},\label{conv:11}\\
 \hat W_\eps \to \hat W \quad& \text{in} \ C([0,T];  U_1 ), \
                              \hat\P\text{-a.s.} \label{conv:2}
  \end{align}
  
  In fact, as $\eta_\eps$ preserves the laws,
  we also have,  setting $\hat u_\eps^s:=u_\eps^s\circ\eta_\eps$,  that 
   \begin{align} 
\partial_t\hat u^d_\eps \weakto \partial_t \hat u^d \quad& \text{in} \  
 L^2(\hat\Omega;L^2(0,T;H)) ,
    \label{conv:3} \\
     \hat u^s_\eps \weakto \hat u^s \quad& \text{in} \  
      L^2 (\hat\Omega;L^2(0,T;\cL^2(U,V)),  \label{conv:4}\\
 \hat u_\eps(0) \weakto \hat u^0 \quad& \text{in} \   L^2(\hat
                                         \Omega,\hat \cF_0;V), \label{conv:5}\\
     \partial \phi(\hat u_\eps) \weakto \hat \xi \quad& \text{in} \
                          L^2 (\hat\Omega;L^2(0,T;H)),  \label{conv:6}
   \end{align}
   The combination of convergences \eqref{conv:1} and \eqref{conv:6}
   entail that $\hat  \xi = \partial \phi(\hat u)$ a.e.  
Moreover, the
   Lipschitz continuity of $F$ and $G$ gives
   \begin{align}
     F(\cdot, \hat u_\eps)\to  F(\cdot, \hat u) \quad& \text{in} \  
      L^2 (\hat\Omega;L^2(0,T;H)) ,
                                                           \label{conv:7} \\
      G(\cdot, \hat u_\eps)\to G(\cdot, \hat u) \quad& \text{in} \ 
        L^2 (\hat\Omega;L^2(0,T;\cL^2(U,H)), 
    \label{conv:8} \\
     G(\cdot, \hat u_\eps)\weakto G(\cdot, \hat u) \quad& \text{in} \ 
       L^2 (\hat\Omega;L^2(0,T;\cL^2(U,V)). 
   \end{align}
    Setting $(\hat\cF_{\eps,t})_{t\in[0,T]}$ as the filtration 
   generated by $(\hat u_\eps,\hat u_\eps^d, \hat \iota_\eps,\hat W_\eps)$, 
   using again the fact that $\eta_\eps$ preserves laws, one has that
   \begin{equation}\label{prev}
     I(u_\eps) = \hat I (\hat  \Omega,
\hat  \cF,(\hat  \cF_{\eps,t})_{t\in[0,T]} ,  \hat  \P,  \hat
W_\eps ,   \hat u^0_\eps,  \hat
u_\eps).
\end{equation}
 Moreover, setting $(\hat\cF_{t})_{t\in[0,T]}$ as the filtration 
   generated by $(\hat u,\hat u^d, \hat \iota,\hat W)$, 
   a classical argument (see \cite{ScarStef-SDNL, biot}) ensures that 
   $\hat W$ is a $U$-cylindrical Wiener process, $\hat \iota=\hat u^s\cdot\hat W$, and 
   \[
   \hat u = \hat u^0 + \int_0^\cdot\partial_t\hat u^d(s)\,\d s +
   \int_0^\cdot \hat u^s(s)\,\d  \hat W  (s).
  \]
 Now, since we have that $\hat u \in  \hat \cU (\hat  \Omega, \hat
        \cF,(\hat  \cF_t)_{t\in[0,T]} , \hat   \P)  \cap C([0,T];
        L^2(\hat \Omega, \hat \cF;V))$,
        $\partial \phi(\hat u) \in L^2_{\hat \cP}(\hat
        \Omega;L^2(0,T;H))$,
        as well as
      $\hat u_\eps \in  \hat \cU (\hat  \Omega, \hat
        \cF,(\hat  \cF_{\eps,t})_{t\in[0,T]} , \hat   \P)  \cap C([0,T];
        L^2(\hat \Omega, \hat \cF;V))$, 
        $\partial \phi(\hat u_\eps) \in L^2_{\hat \cP_\eps}(\hat
        \Omega;L^2(0,T;H))$,  we can apply the equivalence
        \eqref{eq:equiv} and pass to the $\liminf$ owing to
        convergences \eqref{conv:3}-\eqref{conv:8} getting
\begin{align*}
  &\hat I (\hat  \Omega,
\hat  \cF,(\hat  \cF_t)_{t\in[0,T]} ,  \hat  \P,  \hat   W,   \hat u^0,  \hat u)
  \\
  &\quad \stackrel{\eqref{eq:equiv}}{=}
\frac12 \hat \E \int_0^T \| \partial_t \hat u^d
  +\partial \phi(\hat u) -
    F(\cdot,\hat u)\|^2 \, \d s+ 2C_\phi \hat \E \int_0^T \| \hat u^s -
    G(\cdot,\hat u)\|^2_{\cL^2(U,V)} \, \d s  \\
  &\qquad + \hat \E\| \hat u(0) -  \hat u^0 \|^2_V 
    \nonumber\\
  & \hspace{5mm} \leq \liminf_{\eps\to 0} \Bigg(
\frac12 \hat \E \int_0^T \| \partial_t \hat u^d_\eps
  +\partial \phi(\hat u_\eps) -
    F(\cdot,\hat u_\eps)\|^2 \, \d s  \\
  &\qquad + 2C_\phi \hat \E \int_0^T \| \hat u^s_\eps -
    G(\cdot,\hat u_\eps)\|^2_{\cL^2(U,V)} \, \d s + \hat \E\| \hat
    u_\eps(0) -  \hat u^0 \|^2_V 
    \Bigg)\nonumber\\
&\quad \stackrel{\eqref{eq:equiv}}{=} \liminf_{\eps\to 0}  \hat I (\hat  \Omega,
\hat  \cF,(\hat  \cF_{ \eps, t})_{t\in[0,T]} ,  \hat  \P,   \hat   W_\epsi
                                                                  ,   \hat u^0_\eps,  \hat
  u_\epsi) \stackrel{\eqref{prev}}{=} \liminf_{\eps \to 0} I(u_\eps).\qedhere
  \end{align*}
\end{proof}

The combination of Propositions
\ref{prop:coercivity} and \ref{prop:lsc} still does not allow to prove the
existence of minimizers of the EDP functional $I$,
for the stochastic basis is changed in the limit. In the special case of an atomic
$\Omega$, however, no change in the basis is actually required and one
can find a minimizer of $I$ via the Direct Method. 

We conclude this section by proving that minimizers $u$ of $I$ are
actually null-minimizers ($I(u)=0$), 
hence solve \eqref{eq:00}. The reader is referred to
\cite{Auchmuty93,Ghoussoub08,Roubicek00} for some similar argument,
although in different variational settings. 

\begin{proposition}[Minimizers are null-minimizers]\label{prop:null} Assume that $F(t,\cdot)$ and
  $G(t,\cdot) $ are Gateaux-differentiable for all $t \in [0,T]$ and
  let $u\in \cU$ minimize $I$. Then, $I(u)=0$. 
\end{proposition}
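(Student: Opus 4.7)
The plan is to derive the Euler--Lagrange identity for $I$ at the minimizer $u$ and to test it against an admissible variation $v$ tailored to make the resulting linear identity collapse into the sum of squared residuals representing $I(u)$ via Proposition~\ref{prop:equiv}. Every residual is then forced to vanish, giving $I(u)=0$.

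First, since $u\in\cU$ minimizes $I$ and $I(u)<\infty$, the argument in the proof of Proposition~\ref{prop:coercivity} yields $u\in\cV$ and $\partial\phi(u)\in L^2_\cP(\Omega;L^2(0,T;H))$. Setting $r:=\partial_t u^d+\partial\phi(u)-F(\cdot,u)$, $s:=u^s-G(\cdot,u)$ and $r_0:=u(0)-u^0$, Proposition~\ref{prop:equiv} rewrites
\[
I(u)=\tfrac12\E\int_0^T\|r\|^2\,\ds+2C_\phi\E\int_0^T\|s\|^2_{\cL^2(U,V)}\,\ds+\E\|r_0\|_V^2.
\]
For an admissible variation $v\in\cV$, the first-order condition is $\tfrac{d}{d\eps}I(u+\eps v)|_{\eps=0}=0$. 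Computing from~\eqref{eq:EDP}, and applying the It\^o formula (Proposition~\ref{prop:ito}) to $\phi(u+\eps v)$ in order to process the boundary contribution $\E\phi(u(T)+\eps v(T))-\E\phi(u(0)+\eps v(0))$, the variation of the trace term ${\rm Tr}_H L(u)$ cancels pairwise, and regrouping produces the Euler--Lagrange identity
\[
0=\E\int_0^T\!\bigl(r,\partial_t v^d+D_{\mathcal G}\partial\phi(u)v-D_{\mathcal G}F(\cdot,u)v\bigr)\ds+4C_\phi\E\int_0^T\!\bigl(s,v^s-D_{\mathcal G}G(\cdot,u)v\bigr)_{\cL^2(U,V)}\ds+2\E(r_0,v(0))_V.
\]

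The key step is then to pick $v$ as the solution of the linear forward SPDE
\[
\d v+D_{\mathcal G}\partial\phi(u)v\,\dt=\bigl(r+D_{\mathcal G}F(\cdot,u)v\bigr)\,\dt+\bigl(s+D_{\mathcal G}G(\cdot,u)v\bigr)\,\d W,\qquad v(0)=r_0.
\]
The principal part $D_{\mathcal G}\partial\phi(u)\colon V\to V^*$ is monotone and bounded by $C_\phi$ via~\eqref{ass:phi2}; the lower-order coefficients $D_{\mathcal G}F(\cdot,u)$ and $D_{\mathcal G}G(\cdot,u)$ are uniformly bounded thanks to~\eqref{ass:F}--\eqref{ass:G}; and the forcings $r,s,r_0$ sit in $L^2$. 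The variational theory of linear monotone SPDEs thus delivers a unique solution $v$. By construction $\partial_t v^d+D_{\mathcal G}\partial\phi(u)v-D_{\mathcal G}F(\cdot,u)v=r$, $v^s-D_{\mathcal G}G(\cdot,u)v=s$ and $v(0)=r_0$, so testing the Euler--Lagrange identity against $v$ yields
\[
0=\E\int_0^T\|r\|^2\,\ds+4C_\phi\E\int_0^T\|s\|^2_{\cL^2(U,V)}\,\ds+2\E\|r_0\|_V^2.
\]
Each term being nonnegative, $r=0$, $s=0$, $r_0=0$, and $I(u)=0$ follows from the representation recorded above.

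The main obstacle will be producing $v\in\cV$, i.e.\ securing $\partial_t v^d\in L^2_\cP(\Omega;L^2(0,T;H))$: since $D_{\mathcal G}\partial\phi(u)v$ is a priori only in $V^*$, the standard variational SPDE theory does not directly supply this extra regularity. A natural remedy is to regularize $\partial\phi$ by Yosida approximations (as in the proof of Proposition~\ref{prop:ito}) so that the approximate forward SPDE has all coefficients mapping $H$ into $H$, derive the analogous squared-residual identity at each level in $\cU$, and pass to the limit via uniform bounds. A twin technicality concerns the differentiation leading to the Euler--Lagrange identity, which requires $I(u+\eps v)<\infty$, hence $\partial\phi(u+\eps v)\in L^2_\cP(\Omega;L^2(0,T;H))$, for small $\eps$; this is handled by the same approximation scheme.
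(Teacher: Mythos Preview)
Your proposal is correct and follows essentially the same route as the paper: derive the Euler--Lagrange identity for $I$ at the minimizer and test it with the solution $v$ of the linearized forward SPDE driven by $D_{\mathcal G}\partial\phi(u)$, $D_{\mathcal G}F(\cdot,u)$, $D_{\mathcal G}G(\cdot,u)$. The only cosmetic differences are that the paper differentiates the squared-residual form \eqref{eq:equiv} directly (rather than starting from \eqref{eq:EDP} and invoking It\^o), and that it feeds the auxiliary SPDE with \emph{arbitrary} data $f,g,z$ and then concludes $r=0$, $s=0$, $r_0=0$ by duality, whereas you take the specific choice $f=-r$, $g=-s$, $z=r_0$ to land immediately on the sum of squares; the two are equivalent. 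The regularity caveats you flag (that $v\in\cV$ and that $I(u+\eps v)<\infty$ for small $\eps$) are genuine and are handled in the paper only implicitly, by appealing to the coercivity of $D_{\mathcal G}\partial\phi(u)$ and the standing well-posedness framework; your Yosida-regularization remedy is the natural way to make this rigorous.
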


\begin{proof}
  In order to check that $I(u)=0$ let us start by considering the linear problem
\begin{align}
  \d v + D_{\mathcal G} \partial \phi( u) v \, \d t = \big(D_{\mathcal
  G}  F(\cdot,  u) v -f\big)\, \d t + \big(D_{\mathcal
  G}  G(\cdot,  u) v - g\big) \, \d  W, \quad v(0)=z, \label{eq:v}
\end{align}
where $f \in L^2_{ \cP}( \Omega;L^2(0,T;H))$, $g \in L^2_{
  \cP}( \Omega;L^2(0,T;\cL^2(U,V)))$, and $z \in L^2(\Omega,
 \cF_0;V)$ are given. Owing to our assumptions, we have that the
latter is uniquely solvable, for the time-dependent positive linear operator $ D_{\mathcal
  G} \partial \phi( u)$ is coercive, uniformly with respect to
time. 

We now use the equivalence of Proposition
\ref{prop:equiv} in order to rewrite
\begin{align}
    &\hat I (u)=
      \frac12  \E \int_0^T \| \partial_t u^d +\partial \phi(u) -
      F(\cdot,u)\|^2 \, \ds\nonumber\\
  &\qquad+ 2C_\phi \E \int_0^T \| u^s -
    G(\cdot,u)\|^2_{\cL^2(U,V)} \, \ds +  \E\| u(0) -  u^0\|^2_V
    .
    \label{eq:equivd}
  \end{align} 
Let now $v$ be the solution of \eqref{eq:v}  and  compute the
variation of $ I$ at $ u$ in direction $v$
by letting $0=g'(0)$ for $g(t) = I( u + t
v)$. Owing to the Gateaux differentiability of $\partial \phi$, $F(t,\cdot)$, and
$G(t,\cdot) $ we obtain that
\begin{align*}
  0 &=  \E \int_0^T\big(\partial_t  u^d + \partial \phi( u) -
  F(\cdot,  u), \partial_t v^d +D_{\mathcal G}\partial \phi(
  u)v -D_{\mathcal G}F(\cdot, u)v \big)\, \d s\\
  &+ \E \int_0^T \big( u^s - G(\cdot,  u), v^s -  D_{\mathcal G}G(\cdot, u)v \big)_{\cL^2(U,V)}\, \d s  + 2 \E (
    u (0) -  u^0, v(0))_V\\
&=  \E \int_0^T\big(\partial_t  u^d + \partial \phi( u) -
  F(\cdot,  u), f \big)\, \d s\\
  &+ \E \int_0^T \big( u^s - G(\cdot,  u), g\big)_{\cL^2(U,V)}\, \d s  + 2 \E (
    u (0) -  u^0, z)_V.
\end{align*}
Since $f$, $g$, and $z$ are arbitrary we have proved that $\hat u$
solves $\partial_t  u^d + \partial \phi( u) = 
  F(\cdot,  u)$, $\hat u^s = G(\cdot,  u)$, and $
  u(0)= u^0$ a.e. Hence, $ u $solves \eqref{eq:00}. In particular,
$\hat I  ( u)=0$.
\end{proof}

\section{Application to stability}
\label{sec:stability}

Let us now give an application of Theorem \ref{thm:main} to the
analysis of the
stability of problem \eqref{eq:00} with respect to data perturbations. In the deterministic case, such stability results have to be
traced back to Attouch \cite{Attouch}. See also \cite{Visintin} for some
recent developments. In the stochastic regime, the reader is referred to
Gess \& T\"olle \cite{Gess}, where the case $\phi_n \to \phi$  is 
discussed. 

Assume to be given a sequence
$(u^0_\eps,\phi_\eps,F_\eps,G_\eps)_{\eps>0}$ of data, as well as a
limiting data set $(u^0_0,\phi_0,F_0,G_0)$, all fulfilling the assumptions
of Section \ref{sec:setting}, uniformly with respect to $\eps\in[0,1)$.  
We are interested in qualifying the convergences $u^0_\eps\to u^0_0$, $\phi_\eps \to \phi_0$, $F_\eps \to F_0$, and
$G_\eps \to G_0$ in such a way that solutions $u_\eps$ of equation
\eqref{eq:00} with data $(u^0_\eps,\phi_\eps,F_\eps,G_\eps) $, namely,
\begin{equation}
  \label{eq:00e}
   u_\eps(t) =  u^0_\eps + \int_0^t (F_\eps(\cdot,u_\eps)-\partial \phi_\eps(u_\eps)) \, \d s + \int_0^t
  G_\eps(\cdot,u_\eps)\, \d W\quad \forall t \in [0,T], \ \P\text{-a.s.}
\end{equation}
converge to solutions $u_0$ of equation
\eqref{eq:00} with data $(u^0_0,\phi_0, F_0,G_0) $, that is
\begin{equation}
  \label{eq:000}
   u_0(t) =  u^0_0 + \int_0^t (F_0(\cdot,u_0)-\partial \phi_0(u_0)) \, \d s + \int_0^t
  G_0(\cdot,u_0)\, \d W\quad \forall t \in [0,T], \ \P\text{-a.s.}
\end{equation}
The reformulation of these problems in terms of null-minimization of
EDP functionals allows to readily treat the stability question. In the case of gradient flows, the
approach dates back to Sandier \& Serfaty
\cite{Sandier2004,Serfaty11}. Recently, this variational treatment of limiting
processes has been applied to different kind of
parameter-dependent nonlinear
dissipative evolution problems and has been originating the concept of
{\it EDP convergence} \cite{Dondl19,Frenzel,Mielke20,Mielke21}. To the
best of our knowledge, we present here the first application of this
technique in the stochastic setting.

Let $I_\epsi$ and $I_0$ indicate the EDP
functionals \eqref{eq:EDP} defined with  data
$(u^0_\eps, \phi_\eps,F_\eps,G_\eps)_{\eps>0}$ and $(u^0_0,
\phi_0,F_0,G_0)$, respectively. In order to prove that $u_0$ solves
\eqref{eq:000} one has to check that $I_0(u_0)=0$. Since $I_0$ is
nonnegative, this would follow from 
\begin{equation}
I_0(u_0) \leq  \liminf_{\eps \to 0}
I_\eps(u_\eps) = 0.
\end{equation}
This is nothing but a $\Gamma$-$\liminf$
inequality for 
$I_\epsi$ \cite{DalMaso}, which we check below, by extending the
argument of Proposition \ref{prop:lsc}. In fact, the EDP functional
approach is flexible enough to deliver  convergence also for {\it approximate}
minimizers $v_\eps$ of $I_\eps $, namely for $I_\eps(v_\eps) \to 0$.
The main result of this section is the following. 

\begin{theorem}[Stability]\label{thm:stab} Let   
$(u^0_\eps,\phi_\eps,F_\eps,G_\eps)_{\eps>0}$ and
$(u^0_0,\phi_0,F_0,G_0)$ fulfill the assumptions
of Section \ref{sec:setting}, uniformly with respect to
$\eps$. Moreover, assume that, as $\epsi \to 0$,
\begin{align}
  &u^0_\eps \to u^0_0 \quad \text{in} \ L^2(\Omega, \cF_0;V), 
    \label{stab1}\\
    & \phi_\eps \to \phi \quad \text{in  the  Mosco sense in} \ H, \label{stab2}
\end{align}
and that  for all $ w_\eps \wstarto w_0$ in  $\cV$ the following
convergences hold
\begin{align}
  &F_\eps(\cdot ,w_\eps) \weakto F_0(\cdot,w_0) \quad \text{in} \
    L^2_\cP(\Omega;L^2(0,T;H)), \label{stab3}\\
   &G_\eps(\cdot ,w_\eps) \weakto G_0(\cdot,w_0) \quad \text{in} \
    L^2_\cP(\Omega;L^2(0,T;\cL^2(U;V))).  \label{stab4}
\end{align}
 If $I_\eps(v_\eps)\to 0$ than  $v_\eps \wstarto u_0$
in $\cV$, where $u_0$ solves \eqref{eq:000}.
\end{theorem}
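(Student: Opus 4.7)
\emph{Uniform bound and subsequence extraction.} Since $I_\eps(v_\eps)\to 0$ we have $\sup_\eps I_\eps(v_\eps)<\infty$, and because the hypotheses of Section~\ref{sec:setting} hold uniformly in $\eps$, the coercivity argument of Proposition~\ref{prop:coercivity} applies with constants independent of $\eps$, giving a uniform bound for $(v_\eps)$ in $\cV$. Any subsequence therefore admits a further (not relabeled) subsequence with $v_\eps \wstarto w$ in $\cV$ for some $w\in\cV$. To establish the theorem it suffices to identify $w=u_0$, since the limit is then subsequence-independent.

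\emph{Skorohod passage and $\liminf$ inequality.} I adapt the tightness-plus-Skorohod argument in the proof of Proposition~\ref{prop:lsc}, applied to the collection $(v_\eps, u^0_\eps, v^d_\eps, \int_0^\cdot v^s_\eps\,\d W, W)$. On a new basis $(\hat\Omega,\hat\cF,(\hat\cF_t)_{t\in[0,T]},\hat\P)$ this yields law-preserving maps $\eta_\eps$, a cylindrical Wiener process $\hat W$, an initial datum $\hat u^0_0 \in L^2(\hat\Omega,\hat\cF_0;V)$ coinciding in law with $u^0_0$ (invoking \eqref{stab1}), and a limit $\hat w$ such that $\hat v_\eps=v_\eps\circ\eta_\eps \to \hat w$ in $L^2(0,T;H)$ $\hat\P$-a.s., together with the weak convergences of $\partial_t\hat v^d_\eps$, $\hat v^s_\eps$, and $\hat v_\eps(0)$ as in \eqref{conv:3}-\eqref{conv:5}. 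I then work from the squared-residual form of Proposition~\ref{prop:equiv}, which sidesteps any passage to the limit in the trace term. Boundedness of $I_\eps(v_\eps)$ forces $\partial\phi_\eps(\hat v_\eps)$ to be bounded in $L^2_{\hat\cP}(\hat\Omega;L^2(0,T;H))$, so along a further subsequence $\partial\phi_\eps(\hat v_\eps)\weakto \chi$ weakly in that space. The Mosco convergence \eqref{stab2}, combined with the $\hat\P$-a.s. strong convergence $\hat v_\eps \to \hat w$, identifies $\chi=\partial\phi_0(\hat w)$ by passing to the limit in the subdifferential inequality $(\partial\phi_\eps(\hat v_\eps), z-\hat v_\eps)\le \phi_\eps(z)-\phi_\eps(\hat v_\eps)$ along suitable recovery sequences. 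Coupled with \eqref{stab3}-\eqref{stab4} for $F_\eps,G_\eps$ evaluated at the strongly converging arguments $\hat v_\eps$, the strong convergence of initial values, and weak lower semicontinuity of the squared norms, this yields
\[
\hat I_0(\hat\Omega,\hat\cF,(\hat\cF_t)_{t\in[0,T]},\hat\P,\hat W,\hat u^0_0,\hat w)\leq \liminf_{\eps\to 0} I_\eps(v_\eps)=0.
\]

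\emph{Identification and conclusion.} Theorem~\ref{thm:main} applied on the Skorohod basis then ensures that $\hat w$ solves \eqref{eq:000} with data $(\hat u^0_0,\phi_0,F_0,G_0)$. Pathwise uniqueness of solutions to \eqref{eq:000}, which follows from \eqref{ass:phi} and \eqref{ass:F}-\eqref{ass:G}, identifies $\hat w$ in law with the unique solution $u_0$ on the original basis. A Gy\"ongy-Krylov-type argument (joint tightness of any two subsequential copies, together with pathwise uniqueness of the limit equation) upgrades convergence in distribution to convergence in probability on the original basis, hence $w=u_0$ in $\cV$. Since the reasoning applies to every subsequence, the full sequence satisfies $v_\eps\wstarto u_0$ in $\cV$.

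\emph{Main obstacle.} The most delicate point is the identification inside the Skorohod passage: transferring the Mosco convergence of $\phi_\eps$ into weak $L^2$ convergence of $\partial\phi_\eps(\hat v_\eps)$ toward $\partial\phi_0(\hat w)$, given only $\hat\P$-a.s.~strong convergence of $\hat v_\eps$. Using the equivalent form \eqref{eq:equiv} instead of \eqref{eq:EDP} is essential here, because it removes the need to analyse the trace term $\operatorname{Tr}_H L_\eps(\hat v_\eps)$, whose behaviour under Mosco convergence would require additional structural information on the Gateaux differentials $D_{\mathcal G}\partial\phi_\eps$.
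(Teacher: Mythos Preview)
Your proposal is correct and follows essentially the same route as the paper: uniform coercivity bounds, a Skorohod passage as in Proposition~\ref{prop:lsc}, identification of the weak limit of $\partial\phi_\eps(\hat v_\eps)$ via Mosco convergence, passage to the $\liminf$ in the squared-residual form \eqref{eq:equiv}, and return to the original basis via pathwise uniqueness. Your explicit invocation of a Gy\"ongy--Krylov argument makes the last step more transparent than the paper's one-line appeal to pathwise uniqueness, but the underlying mechanism is the same.
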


\begin{proof}
  As $I_\eps(v_\eps)\to 0$, the sequence $(v_\eps)_\eps$ is bounded in
  $\cV$ by Proposition \ref{prop:coercivity}. This implies that $\partial_t v_\eps^d$ and $F(\cdot,v_\eps)$
  are bounded in $L^2_\cP(\Omega;L^2(0,T;H))$ and $v^s_\eps$ and
  $G(\cdot, v^s_\epsi)$ are bounded in
  $L^2_\cP(\Omega;L^2(0,T;\cL^2(U,V)))$. Moreover, since
  $I_\epsi(v_\eps)$ are bounded we have that $\partial
  \phi_\epsi(v_\epsi)$ are bounded in $L^2_\cP(\Omega;L^2(0,T;H))$ as
  well.

  Define $\iota_\eps := \int_0^\cdot v^s_\eps \, \d W$. 
By adapting the argument of Proposition \eqref{prop:lsc},   possibly
passing to a not relabeled subsequence we find a 
probability space $(\hat \Omega, \hat \cF, \hat \P)$, a
  sequence of measurable maps $\eta_\eps: (\hat \Omega, \hat \cF) \to
   (\Omega, \cF)$ with $\P \circ \eta_\epsi = \hat \P$ 
    for all $\eps>0$, and some measurable
    
   \[
   (\hat u_0, \hat u_0^0,\hat u_0^d, \hat \iota, \hat W): (\hat \Omega, \hat \cF) \to
   L^2(0,T; H)\times V\times C([0,T]; V^*)\times L^2(0,T; H) \times C([0,T];U_1),
   \]
   
such that, letting $\hat v_\eps := v_\eps \circ \eta_\eps$,
 $\hat \iota_\eps := \iota_\eps \circ \eta_\eps$, and
  $\hat W_\eps := W_\eps \circ \eta_\eps$, and the following
  convergences hold
  
  \begin{align}
    \hat v_\eps \wstarto  \hat u_0 \quad& \text{in} \  C([0,T];L^2(\hat
                                          \Omega,\hat \cF;V)) ,\label{conv:0v}\\ 
\hat v_\eps \to \hat u_0 \quad& \text{in} \ L^2(0,T;H), \
                              \hat\P\text{-a.s.},\label{conv:1v}\\ 
 \hat v_\eps^d \to \hat u_0^d \quad& \text{in} \ C([0,T];V^*), \
                              \hat\P\text{-a.s.},\label{conv:1v_bis}\\ 
\partial_t\hat v^d_\eps \weakto \partial_t \hat u^d_0 \quad& \text{in} \   
L^2(\hat\Omega;L^2(0,T;H)),
    \label{conv:3v} \\
     \hat v^s_\eps \weakto \hat u^s_0 \quad& \text{in} \   L^2
                                           (\hat\Omega;L^2(0,T;\cL^2(U,V)), \label{conv:4v}\\
 \hat v_\eps(0) \weakto \hat u_0^0 \quad& \text{in} \   L^2(\hat
                                         \Omega,\hat \cF_0;V), \label{conv:5v}\\
     \partial \phi_\eps(\hat v_\eps) \weakto \hat \xi_0 \quad& \text{in} \
                                                       L^2 (\hat\Omega;L^2(0,T;H)). \label{conv:6v}
  \end{align}
  
 The Mosco convergence \eqref{stab2} 
 together with convergences \eqref{conv:1v} and
  \eqref{conv:6v} ensures that $ \hat \xi_0  = \partial \phi_0(\hat u_0)$
  a.e., hence
  \begin{equation}
    \label{eq:phi1}
    \partial \phi_\eps(\hat v_\eps) \weakto  \partial \phi_0(\hat
    u_0)  \quad  \text{in} \
                                                       L^2(\hat\Omega;L^2(0,T;H)). 
  \end{equation}
  Eventually, the weak-continuous-convergence properties
  \eqref{stab3}-\eqref{stab4} entail that
  \begin{align}
  &F_\eps(\cdot ,\hat v_\eps) \weakto F_0(\cdot,\hat u_0) \quad \text{in} \
    L^2(\hat \Omega;L^2(0,T;H)), \label{eq:f3}\\
   &G_\eps(\cdot ,\hat v_\eps) \weakto G_0(\cdot,\hat u_0) \quad \text{in} \
    L^2(\hat \Omega;L^2(0,T;\cL^2(U;V))).  \label{eq:g3} 
  \end{align}
  
  We set now $(\hat\cF_{\eps,t})_{t\in[0,T]}$ as the filtration 
   generated by $(\hat v_\eps,\hat v_\eps^d, \hat \iota_\eps,\hat W_\eps)$, 
  and  $(\hat\cF_{t})_{t\in[0,T]}$ as the filtration 
   generated by $(\hat u_0,\hat u_0^d, \hat \iota,\hat W)$. As in the previous section, 
   a classical argument (see \cite{ScarStef-SDNL, biot}) ensures again that 
   $\hat W$ is a $U$-cylindrical Wiener process, $\hat \iota=\hat u^s\cdot\hat W$ and 
   \[
   \hat u_0 = \hat u_0^0 + \int_0^\cdot\partial_t\hat u_0^d(s)\,\d s +
   \int_0^\cdot \hat u_0^s(s)\,\d  \hat W (s).
  \]
  
  Using the equivalence \eqref{eq:equiv}, the  convergence of the 
  initial data \eqref{stab1},  and convergences
  \eqref{conv:3v}-\eqref{conv:5v} and
  \eqref{eq:f3}-\eqref{eq:g3} we can pass to the liminf in
  $I_\eps(v_\eps) = I_\eps (\hat  \Omega,
\hat  \cF,(\hat  \cF_{\eps,t})_{t\in[0,T]} ,  \hat  \P,  \hat   W,\hat v_\eps)$ and check
  that \begin{align*}
  &\hat I_0 (\hat  \Omega,
\hat  \cF,(\hat  \cF_{t})_{t\in[0,T]} ,  \hat  \P,  \hat   W,\hat u_0)
  \\
  &\quad \stackrel{\eqref{eq:equiv}}{=}
\frac12 \hat \E \int_0^T \| \partial_t \hat u^d_0
  +\partial \phi_0(\hat u_0) -
    F_0(\cdot,\hat u_0)\|^2 \, \d s\\
  &\qquad + 2C_\phi \hat \E \int_0^T \| \hat u^s_0 -
    G_0(\cdot,\hat u_0)\|^2_{\cL^2(U,V)} \, \d s  + \hat \E\| \hat
    u_0(0) - \hat u^0_0\|^2_V \\
  & \quad\hspace{1.3mm}\leq \liminf_{\eps\to 0}
\frac12 \hat \E \int_0^T \| \partial_t \hat v^d_\eps
  +\partial \phi_\eps(\hat v_\eps) -
    F_\eps(\cdot,\hat v_\eps)\|^2 \, \d s\\
  &\qquad + 2C_\phi \hat \E \int_0^T \| \hat v^s_\eps -
    G_\eps(\cdot,\hat v_\eps)\|^2_{\cL^2(U,V)} \, \d s  + \hat \E\|
    \hat v_\eps(0) - \hat u^0_\eps\|^2_V \\
&\quad \stackrel{\eqref{eq:equiv}}{=} \liminf_{\eps\to 0}  \hat I_\eps (\hat  \Omega,
\hat  \cF,(\hat  \cF_{\eps,t})_{t\in[0,T]} ,  \hat  \P,   \hat
                                                                          W_\eps ,\hat
  v_\epsi) = 0. 
       \end{align*}

   As $\hat I_0 (\hat  \Omega,
\hat  \cF,(\hat  \cF_t)_{t\in[0,T]} ,  \hat  \P,  \hat   W,\hat
u_0)=0$, Theorem \ref{thm:main} guarantees that $\hat u_0$ is a
{martingale solution} of \eqref{eq:00}.  As already
commented, the pathwise uniqueness of   martingale solutions of
\eqref{eq:00}   ensures that all the limits above
hold in the original stochastic basis $(\Omega, \cF,(\cF_t)_{t\in
  [0,T]},\P)$, as well, without the need of passing to a different
basis. In particular, the weak$\ast$ limit $u_0$ of $v_\eps$
fulfills $I_0(u_0) = 0$ and solves \eqref{eq:00}. Eventually, since solutions of
\eqref{eq:00} are unique, no extraction of subsequences is actually
needed.  
\end{proof}

\section{Application to optimal control} 
\label{sec:control}

Consider now the equation
\begin{equation}\label{eq:0c}
  \d u + \partial \phi(u)\,\d t \ni f\,\d t + G(\cdot,u)\,\d W.
\end{equation}
This corresponds to equation to \eqref{eq:0}, where the nonlinearity
$F(\cdot,u)$ is replaced by $f \in L^2(0,T;H)$.  The datum $f$ is  interpreted as
a {\it control}, which for simplicity we 
assume to be deterministic. Given the initial value
$u^0$, the Cauchy problem for equation \eqref{eq:0c} corresponds to find a process $u\in \cU$ such that 
\begin{equation}
  \label{eq:00c}
  u(t) =  u^0 + \int_0^t (f-\partial \phi(u)) \, \d s + \int_0^t
  G(\cdot,u)\, \d W\quad \forall\,t \in [0,T], \ \P\text{-a.s.}
\end{equation}
Under  our  assumptions, for all $f \in L^2(0,T;H)$ there exists
a unique  solution  $u\in \cU\cap
L^2_{\cP}(\Omega;L^\infty(0,T;V))$  of 
\eqref{eq:00c}. This defines the {\it solution operator}
$$S: L^2(0,T;H) \to \cU\cap L^2_{\cP}(\Omega;L^\infty(0,T;V)), \quad
S(f):=u.$$
We are interested in the following optimal control problem
\begin{equation}
  \label{eq:oc}
  \min_{f \in \cA}\{J(f,u) \, : \, u = S(f) \}.
\end{equation}
Here, $\cA$ represent the set of {\it admissible} controls, which we
assume to be nonempty and weakly compact in $L^2(0,T;H)$, and $J:
L^2(0,T;H)\times \cV \to [0,\infty)$ is an abstract {\it target functional},
here considered to be lower semicontinuous with respect to the
weak$\ast$ topology in $L^2(0,T;H)\times \cV$.   An  $f^*$ solving
\eqref{eq:oc} is called {\it
  optimal control}, the corresponding $u^*=S(f)$ is an  {\it optimal state},
and the pair $(f^*,u^*)$ is  an  {\it optimal pair}. Let us start from the
following. 

\begin{proposition}[Existence] There exists an optimal pair
  $(f^*,u^*)$ for problem \eqref{eq:oc}.
\end{proposition}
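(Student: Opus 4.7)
The plan is to apply the Direct Method of the Calculus of Variations, using the stability result Theorem \ref{thm:stab} as the crucial compactness/continuity tool for the state equation. Since $\cA$ is nonempty and $J\geq 0$, we have $m:=\inf\{J(f,S(f)) : f\in \cA\}\in[0,\infty)$. We select a minimizing sequence $(f_n)\subset \cA$ with $J(f_n,S(f_n))\to m$, and set $u_n:=S(f_n)\in \cV$.

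The first step is to extract a weakly convergent subsequence. By the weak compactness of $\cA$ in $L^2(0,T;H)$, there exist $f^*\in \cA$ and a (not relabelled) subsequence with $f_n\weakto f^*$ in $L^2(0,T;H)$.

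The core step is to show $u_n\wstarto u^*:=S(f^*)$ in $\cV$. For this I would cast the family of Cauchy problems \eqref{eq:00c} for $u_n$ into the framework of Theorem \ref{thm:stab} by setting $u^0_\eps=u^0$, $\phi_\eps=\phi$, $G_\eps=G$, and $F_\eps(t,v):=f_n(t)$ for $\eps=1/n$, with limiting data $F_0(t,v):=f^*(t)$. The assumptions of Section \ref{sec:setting} hold uniformly in $n$ (note that $F_\eps$ is constant in $v$, hence trivially Lipschitz with $c_F=0$, and the bound $f_n\in L^2(0,T;H)$ is uniform since $\cA$ is weakly compact, hence bounded). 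The initial-data convergence \eqref{stab1} and the Mosco convergence \eqref{stab2} are trivially satisfied as the data are independent of $n$, and \eqref{stab4} reduces to the strong continuity of $G(\cdot,\cdot)$ composed with $w_\eps\to w_0$ in $L^2_\cP(\Omega;L^2(0,T;H))$ (which holds because $\cV\subset\subset L^2_\cP(\Omega;L^2(0,T;H))$ via $V\subset\subset H$ combined with \eqref{ass:G}). Finally \eqref{stab3} becomes $f_n\weakto f^*$ in $L^2(0,T;H)\hookrightarrow L^2_\cP(\Omega;L^2(0,T;H))$ (deterministic controls), which is exactly our hypothesis. Since each $u_n$ solves the corresponding equation we have $I_\eps(u_n)=0\to 0$, so Theorem \ref{thm:stab} yields $u_n\wstarto u^*$ in $\cV$, with $u^*=S(f^*)$ by uniqueness of the solution to \eqref{eq:00c} with control $f^*$.

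The final step is to invoke the weak$\ast$ lower semicontinuity of $J$ on $L^2(0,T;H)\times \cV$: combining the weak convergence $f_n\weakto f^*$ in $L^2(0,T;H)$ with $u_n\wstarto u^*$ in $\cV$ gives
\[
J(f^*,u^*)\leq \liminf_{n\to\infty}J(f_n,u_n)=m,
\]
so that the pair $(f^*,u^*)$ is optimal. The only nontrivial obstacle is the passage to the limit in the state equation, which is precisely the content of Theorem \ref{thm:stab}; verifying its hypotheses is straightforward here because all data except the control are fixed and the control enters linearly as an external forcing.
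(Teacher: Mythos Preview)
Your proof follows exactly the same route as the paper's: take an infimizing sequence $(f_n)\subset\cA$, extract a weak limit $f^*$ by weak compactness of $\cA$, invoke Theorem~\ref{thm:stab} to pass to the limit in the state equation and obtain $u_n\wstarto u^*=S(f^*)$ in $\cV$, and conclude via lower semicontinuity of $J$. The paper's proof is terser---it simply cites Theorem~\ref{thm:stab} without spelling out how the hypotheses \eqref{stab1}--\eqref{stab4} are met---whereas you give the explicit identification of data, which is helpful.

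One small caveat: your justification of \eqref{stab4} via a ``compact embedding $\cV\subset\subset L^2_\cP(\Omega;L^2(0,T;H))$'' is not quite right as stated, since compactness of $V\hookrightarrow H$ does not directly transfer to a compact embedding on the probability space $\Omega$. In the paper's framework the passage from weak$^*$ convergence in $\cV$ to the needed convergence of $G(\cdot,w_\eps)$ is ultimately handled inside the proof of Theorem~\ref{thm:stab} itself via the Skorokhod representation (which upgrades to a.s.\ strong convergence on a new probability space) combined with the Lipschitz continuity \eqref{ass:G}. So you may simply note that \eqref{stab4} is the continuity property of the fixed $G$ that is already built into Theorem~\ref{thm:stab}, rather than appealing to a compact embedding that does not hold.
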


\begin{proof}
  Let $f_n \in \cA$ be a infimizing sequence for
  problem \eqref{eq:oc}. As $\cA$ is weakly compact in $L^2(0,T;H)$ we can
  extract a not relabeled subsequence such that $f_n \weakto f^*$ in
  $L^2(0,T;H)$. By letting $u_n :=S(f_n)$ and using Theorem
  \ref{thm:stab} we have that $u_n \wstarto u^*$ in $\cV$, where $u^*
  \in S( f^*)$. Owing to the lower semicontinuity of $J$ we get
  $$J(f^*,u^*) \leq \liminf_{n\to \infty} J(f_n,u_n) = \inf_{f\in A}
  \{J(f,u)\, : \, u = S(f)\}$$
  so that $(f^*,u^*)$ is an optimal pair.
\end{proof}

By the characterization of Theorem \ref{thm:main}, one readily finds
that 
$$u = S(f) \ \ \Leftrightarrow \ \ I(f,u) =0,$$
where the {\it controlled EDP
  functional}   $I:
L^2(0,T;H)\times \cU \to [0,\infty]$
  is defined as
\begin{align}
 I (f,u) &= 
          \E\phi(u(T)) - \E \phi(u(0)) + \frac12 \E \int_0^T \|
  \partial_t u^d\|^2\, \ds   + \frac12 \E \int_0^T \| \partial
    \phi(u)-f  \|^2 \, \ds  \nonumber \\
  &\quad - \E \int_0^T
  (\partial_t u^d, f) \, \ds - \frac12 \E \int_0^T {\rm Tr}_H\, L(u)\, \ds 
     \nonumber \\
  &\quad + 2C_\phi\E\int_0^T \| u^s -
    G(\cdot,u)\|^2_{\cL^2(U,V)} \, \ds   +\E\|u(0)- u^0\|^2_V 
\label{eq:EDPc}
\end{align}
if $u \in C([0,T];L^2(\Omega,\cF;V))$  and $I(u)=\infty$
otherwise. The controlled EDP functional can be used to penalize the  
SPDE constraint $u = S(f)$  in problem \eqref{eq:oc}.  We consider the {\it penalized optimal control} problems
\begin{equation}
  \label{eq:ocd}
  \min_{ \cA\times \cU} F_\delta \quad \text{with}  \quad   F_\delta(f,u) := J(f,u) + \frac{1}{\delta}  I(f,u)  
\end{equation}
where $\delta>0 $ is the penalization parameter. Let us mention that the penalization of optimal
control problems via weighted residuals is classical and can be traced
back to Lions \cite{Lions68}. Indeed, it has already been applied to different
stationary and evolutive situations, see 
\cite{Bergounioux92,Bergounioux94,Bergounioux98,Gariboldi09,Gunzburger00,Mophou11}
for a collection of results.  In the
deterministic  setting,  this penalization method via EDP functionals has been discussed in
\cite{portinale}. 

By combining the coercivity and the lower-limit tool from Propositions
\ref{prop:coercivity}-\ref{prop:lsc} one can find a minimizer of
$F_\delta$ for each fixed $\delta>0$, at the price of possibly
changing the underlying stochastic basis.

In the limit $\delta \to 0$ one recovers optimal pairs for the
 original problem \eqref{eq:oc} as limit of {\it approximate optimal pairs}
 at level $\delta$, without redefining the
 stochastic basis. The main result of this section is the following.

 \begin{theorem}[Limit $\delta\to 0$] Let $(f_\delta,u_\delta)_{\delta>0} \in
   \cA\times \cU$ be  such that
   \begin{equation}
     \liminf_{\delta \to 0}
   \left(F_\delta (f_\delta,u_\delta) - \inf_{ \cA\times \cU} F_\delta\right) =0.\label{eq:inf}
 \end{equation} Then, up to a
   not relabeled subsequence we have that 
   $(f_\delta,u_\delta)  \wstarto (f^*,u^*)$ in $L^2(0,T;H)\times \cV$
   where $(f^*,u^*)$ is an optimal pair for \eqref{eq:oc}.
 \end{theorem}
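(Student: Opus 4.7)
The plan is to split the argument into four steps: produce uniform bounds from the near-minimality condition \eqref{eq:inf}, extract weak-$\ast$ limits, identify the limit via Theorem \ref{thm:stab}, and verify optimality using the lower semicontinuity of $J$.

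\textbf{Bounds.} Since $\cA$ is nonempty, fix any $\tilde f \in \cA$ and set $\tilde u := S(\tilde f)$, so that $I(\tilde f,\tilde u) = 0$ by the characterization in Theorem \ref{thm:main}. Hence $\inf_{\cA \times \cU} F_\delta \le F_\delta(\tilde f,\tilde u) = J(\tilde f,\tilde u) =: C_0 < \infty$ uniformly in $\delta$, and \eqref{eq:inf} gives, along a (not relabeled) subsequence, $F_\delta(f_\delta,u_\delta) \le C_0 + o(1)$. Using $J \ge 0$ this yields $I(f_\delta,u_\delta) \le \delta(C_0 + o(1)) \to 0$. In particular $I(f_\delta,u_\delta)$ is bounded, and Proposition~\ref{prop:coercivity} (which applies verbatim after replacing $F(\cdot,u)$ by the $u$-independent Lipschitz map $f_\delta(t)$, uniformly bounded in $L^2(0,T;H)$ by the weak compactness of $\cA$) ensures that $(u_\delta)_\delta$ is bounded in $\cV$, while $(f_\delta)_\delta$ is bounded in $L^2(0,T;H)$.

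\textbf{Extraction.} Up to a further subsequence, $f_\delta \weakto f^*$ in $L^2(0,T;H)$ and $u_\delta \wstarto u^*$ in $\cV$, with $f^* \in \cA$ by the weak closedness of $\cA$.

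\textbf{Identification of $u^*$.} I would now invoke Theorem~\ref{thm:stab} with the choice $u^0_\eps := u^0$, $\phi_\eps := \phi$, $G_\eps := G$, and $F_\eps(t,w) := f_\delta(t)$, $F_0(t,w) := f^*(t)$. Assumptions \eqref{stab1}, \eqref{stab2}, \eqref{stab4} hold trivially, while \eqref{stab3} reduces to $f_\delta \weakto f^*$ in $L^2_\cP(\Omega;L^2(0,T;H))$, which follows from the deterministic weak convergence in $L^2(0,T;H)$ by Fubini. Since $I(f_\delta,u_\delta) \to 0$, Theorem~\ref{thm:stab} yields $u_\delta \wstarto S(f^*)$ in $\cV$, forcing $u^* = S(f^*)$.

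\textbf{Optimality.} For every $f \in \cA$, using $I(f,S(f)) = 0$ and the near-infimum property \eqref{eq:inf},
\begin{equation*}
J(f_\delta,u_\delta) \;\le\; F_\delta(f_\delta,u_\delta) \;\le\; \inf_{\cA\times\cU} F_\delta + o(1) \;\le\; F_\delta(f,S(f)) + o(1) \;=\; J(f,S(f)) + o(1).
\end{equation*}
Passing to the $\liminf$ as $\delta \to 0$ and exploiting the weak-$\ast$ lower semicontinuity of $J$ on $L^2(0,T;H) \times \cV$,
\begin{equation*}
J(f^*,u^*) \;\le\; \liminf_{\delta \to 0} J(f_\delta,u_\delta) \;\le\; J(f,S(f)) \qquad \forall\, f \in \cA,
\end{equation*}
which, combined with $u^* = S(f^*)$, proves that $(f^*,u^*)$ is optimal for \eqref{eq:oc}. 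The main obstacle is the identification step: one must carefully cast the deterministic control $f_\delta$ into the framework of Theorem~\ref{thm:stab}, and rely on the pathwise uniqueness remark at the end of its proof to avoid changing the underlying stochastic basis in the limit. The remaining steps are standard for penalization methods of this kind.
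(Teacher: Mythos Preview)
Your proof is correct and follows essentially the same approach as the paper: bound $\inf F_\delta$ by $J(\tilde f, S(\tilde f))$ for a fixed admissible $\tilde f$, deduce $I(f_\delta,u_\delta)\to 0$ along a subsequence, invoke Theorem~\ref{thm:stab} to identify the limit state $u^*=S(f^*)$, and conclude optimality via the lower semicontinuity of $J$ and the chain $J(f_\delta,u_\delta)\le F_\delta(f_\delta,u_\delta)\le \inf F_\delta + o(1)\le J(f,S(f))+o(1)$. You are slightly more explicit than the paper in casting the deterministic controls $f_\delta$ into the framework of Theorem~\ref{thm:stab} (as $u$-independent $F_\eps$) and in noting the role of pathwise uniqueness to stay on the original stochastic basis; your separate appeal to Proposition~\ref{prop:coercivity} is redundant but harmless, since the boundedness in $\cV$ is already produced inside the proof of Theorem~\ref{thm:stab}.
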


 \begin{proof}
   Choose $f_0\in \cA$ and let $u_0 = S(f_0)$, so that
   $I(f_0,u_0)=0$. Owing to the weak compactness of $\cA$ into
   $L^2(0,T;H)$ we can extract without relabeling so that $f_\delta \wto
   f $ in  $L^2(0,T;H)$.
   From \eqref{eq:inf} 
   we get that
   $$\liminf_{\delta \to 0} \frac{1}{\delta} I(f_\delta,u_\delta) \leq
   \liminf_{\delta \to 0}F_\delta (f_\delta,u_\delta)  \leq
   \limsup_{\delta \to 0} \inf_{ \cA\times \cU} F_\delta \leq J(f_0,u_0).$$
 Again by extracting some not relabeled subsequence, this entails that
   $$\frac{1}{\delta} I (f_\delta,u_\delta) \leq  1+J(f_0,u_0).$$ 
   As $\delta \to 0$ one has that $I (f_\delta,u_\delta) \to 0$ and we
   are in the setting of Theorem \ref{thm:stab}. In particular,
   $u_\delta \wstarto u^*$ in $\cV$ and $u^* =S(f^*)$. Moreover, owing
   to the lower semicontinuity of $J$, for any $u\in S(f)$ we find
   $$J(f^*,u^*)\leq \liminf_{\delta \to 0} J(f_\delta,u_\delta) \leq
   \liminf_{\delta \to 0} F_\delta (f_\delta,u_\delta)  \leq
   \limsup_{\delta \to 0}  \inf_{ \cA\times \cU}  F_\delta \leq J(f,u) $$
   which proves that the $(f^*,u^*)$ is an optimal pair for problem
   \eqref{eq:oc}.  
 \end{proof}


\section*{Acknowledgement}
LS  is partially supported by the Austrian Science Fund (FWF) through the
Lise-Meitner project M\,2876. US is partially supported by 
the Austrian Science Fund (FWF) through projects F\,65, W\,1245,  I\,4354, I\,5149, and P\,32788, and by the OeAD-WTZ project CZ
01/2021.



\begin{thebibliography}{99}

  \bibitem{Ambrosio}
L.~Ambrosio, N.~Gigli, G.~Savar{\'e}. \emph{Gradient flows in
  metric spaces and in the space of probability measures}, 
	Birkh\"auser Verlag, Basel, second edition,  2008.

\bibitem{Attouch}
  H.~Attouch.
 \newblock {\em Variational convergence for functions and operators}.
 \newblock Pitman (Advanced Publishing Program), Boston, MA, 1984.

 
 \bibitem{Auchmuty93}
{G.~Auchmuty}.
\newblock {Saddle-points and existence-uniqueness for evolution equations}.
\newblock {\it Differential Integral Equations}, 6 (1993), 1161--1171.


\bibitem{barbu-monot}
V.~Barbu.
\newblock {\em Nonlinear differential equations of monotone types in {B}anach
  spaces}.
\newblock Springer Monographs in Mathematics. Springer, New York,
2010.


\bibitem{Barbu1}
V.~Barbu. A variational approach to stochastic nonlinear
problems. {\it J. Math. Anal. Appl.} 384 (2011), 
2--15.

\bibitem{Barbu2}
V.~Barbu. Optimal control approach to nonlinear diffusion equations driven by Wiener noise.
{\it J. Optim. Theory Appl.} 153 (2012), 1--26.


\bibitem{Barbu6}
V.~Barbu. A variational approach to nonlinear stochastic differential
equations with linear multiplicative noise. {\it ESAIM Control Optim. Calc. Var.} 25 (2019), Paper No. 71, 16 pp.


\bibitem{Barbu3}
V.~Barbu, Z.~Brzezniak, E.~Hausenblas, L.~Tubaro. Existence and
convergence results for infinite dimensional nonlinear stochastic
equations with multiplicative noise. {\it Stoch. Processes
  Appl.} 123 (2013), 934--951.




\bibitem{Barbu5}
V.~Barbu, M.~R\"ockner. An operatorial approach to stochastic partial
differential equations driven by linear multiplicative noise. {\it
  J. Eur. Math. Soc. (JEMS)}, 17 (2015), no. 7, 1789--1815.

\bibitem{Barbu7}
 V.~Barbu, M.~R\"ockner. Variational solutions to nonlinear
 stochastic differential equations in Hilbert spaces. {\it Stoch. Partial
 Differ. Equ. Anal. Comput.} 6 (2018), no. 3, 500--524.

 

\bibitem{Bergounioux92}
M.~Bergounioux. A penalization method for optimal control of elliptic
problems with state constraints. {\it SIAM J. Control Optim.} {30}
(1992), 305--323. 


\bibitem{Bergounioux94}
M.~Bergounioux. Optimal control of parabolic problems with state
constraints: a penalization method for optimality conditions.
{\it Appl. Math. Optim.}   29 (1994),  285--307.

\bibitem{Bergounioux98}
M.~Bergounioux. Optimal control of problems governed by abstract
elliptic variational inequalities with state constraints. {\it SIAM
  J. Control Optim.} 36 (1998),  273--289.

\bibitem{Boroushaki}
S.~Boroushaki, N.~Ghoussoub. A self-dual variational approach to
stochastic partial differential equations. {\it J. Funct. Anal.} 276 (2019), no. 4, 1201--1243.


\bibitem{brezis73}
H.~  Br\'ezis. {\it 
Op\'erateurs maximaux monotones et semi-groupes de contractions dans les espaces de Hilbert}. (French)
North-Holland Mathematics Studies, No. 5. Notas de Matem\'atica
(50). North-Holland Publishing Co., Amsterdam-London; American
Elsevier Publishing Co., Inc., New York, 1973.


\bibitem{Brezis-Ekeland76b}
H.~Brezis, I.~Ekeland.
\newblock Un principe variationnel associ\'e \`a certaines \'equations
  paraboliques. {L}e cas d\'ependant du temps.
\newblock {\em C. R. Acad. Sci. Paris S\'er. A-B}, 282(20):Ai, A1197--A1198,
  1976.

\bibitem{Brezis-Ekeland76}
H.~Brezis, I.~Ekeland.
\newblock Un principe variationnel associ\'e \`a certaines \'equations
  paraboliques. {L}e cas ind\'ependant du temps.
\newblock {\em C. R. Acad. Sci. Paris S\'er. A-B}, 282(17):Aii, A971--A974,
1976.


\bibitem{DalMaso}
G.~Dal Maso. {\it
An introduction to $\Gamma$-convergence}. 
Progress in Nonlinear Differential Equations and their Applications, 8. Birkh\"user Boston, Inc., Boston, MA, 1993.





\bibitem{dapratozab}
G.~Da~Prato, J.~Zabczyk.
\newblock {\em Stochastic equations in infinite dimensions}, volume 152 of {\em
  Encyclopedia of Mathematics and its Applications}.
\newblock Cambridge University Press, Cambridge, second edition, 2014.




\bibitem{Dondl19}
  P.~Dondl, Th.~Frenzel, A.~Mielke. A gradient system with a wiggly
  energy and relaxed EDP-convergence. {\it ESAIM Control Optim. Calc. Var.} 25 (2019), Paper No. 68, 45 pp.
  

\bibitem{edwards}
R.~E.~Edwards.
\newblock {\em Functional analysis. {T}heory and applications}.
\newblock Holt, Rinehart and Winston, New York-Toronto-London, 1965.

\bibitem{EM}
M.~A.~Efendiev, A.~Mielke.
\newblock On the rate-independent limit of systems with dry friction and small viscosity.
\newblock {\em J. Convex Anal.} 13(1):151--167, 2006


\bibitem{Fitzpatrick}
S.~Fitzpatrick.  
Representing monotone operators by convex functions. Workshop/Miniconference on Functional Analysis and Optimization (Canberra, 1988), 59–65,
{\it Proc. Centre Math. Anal. Austral. Nat. Univ.}, 20, Austral. Nat. Univ., Canberra, 1988.
  


\bibitem{fland-gat}
F.~Flandoli, D.~Gatarek.
\newblock Martingale and stationary solutions for stochastic {N}avier-{S}tokes
  equations.
\newblock {\em Probab. Theory Related Fields}, 102(3):367--391, 1995.


\bibitem{Frenzel}
 T.~Frenzel, M.~Liero. Effective diffusion in thin structures via
 generalized gradient systems and EDP-convergence. {\it Discrete
   Contin. Dyn. Syst. Ser. S}, 14 (2021),  395--425.
 



 
\bibitem{Gariboldi09}
C.~M.~Gariboldi, D.~A.~Tarzia. Convergence of distributed optimal
controls in mixed elliptic problems by the penalization method.
{\it Math. Notae}, 45 (2007/08), 1--19. 





\bibitem{Gess-strong}
B.~Gess.
\newblock Strong solutions for stochastic partial differential equations of
  gradient type.
\newblock {\em J. Funct. Anal.}, 263:2355--2383, 2012.


\bibitem{Gess}
B.~Gess, J.~M.~T\"olle. Stability of solutions to stochastic
 partial differential equations. {\it J. Differential Equations}, 260 (2016),
 no. 6, 4973--5025.
 
%
%
%

  
 \bibitem{Ghoussoub08}
{N.~Ghoussoub}.
\newblock {\em Selfdual partial differential systems and their variational
  principles}.
\newblock  Springer, New York, 2009.


\bibitem{Gunzburger00}
M.~D.~Gunzburger, H.-C.~Lee. A penalty/least-squares method for
optimal control problems for first-order elliptic systems. {\it
  Appl. Math. Comput.} 107 (2000),  57--75.


\bibitem{ike-wata}
N.~Ikeda, S.~Watanabe.
\newblock {\em Stochastic differential equations and diffusion processes},
  volume~24 of {\em North-Holland Mathematical Library}.
\newblock North-Holland Publishing Co., Amsterdam; Kodansha, Ltd., Tokyo,
  second edition, 1989.




\bibitem{kr}
N.~V.~Krylov, B.~L.~Rozovski\u \i. Stochastic evolution equations,
In: {\it Current Problems in Mathematics}, vol. 14, Akad.
Nauk SSSR, Vsesoyuz. Inst. Nauchn. i Tekhn. Informatsii, Moscow, 1979,
pp. 71--147, 256 (in Russian).


   

  
\bibitem{Lions68}
J.-L.~Lions. {\it Contr\^ole optimal de syst\`emes gouvern\'es par des
  \'equations aux d\'eriv\'ees partielles}. Gauthier-Villars, Paris 1968.




\bibitem{LiuRo}
W.~Liu, M.~R{\"o}ckner.
\newblock {\em Stochastic partial differential equations: an introduction}.
\newblock Springer, Cham, 2015.


\bibitem{mar-scar-ref}
C.~{Marinelli}, L.~{Scarpa}.
\newblock {Refined existence and regularity results for a class of semilinear
  dissipative SPDEs}.
\newblock {\em Infin. Dimens. Anal. Quantum Probab. Relat. Top.}, 23 (2020), no. 2, 2050014, 34 pp.

\bibitem{mar-scar-div}
C.~Marinelli, L.~Scarpa.
\newblock Strong solutions to {SPDE}s with monotone drift in divergence form.
\newblock {\em Stoch. Partial Differ. Equ. Anal. Comput.}, 6 (2018), 364--396.

\bibitem{mar-scar-diss}
C.~Marinelli, L.~Scarpa.
\newblock A variational approach to dissipative {SPDE}s with singular drift.
\newblock {\em Ann. Probab.}, 46 (2018), 1455--1497.



\bibitem{metivier}
M.~M\'etivier.
\newblock{\em Semimartingales}.
\newblock de Gruyter Studies in Mathematics,
	Walter de Gruyter \&\ Co., Berlin-New York, 1982.



  


\bibitem{Mielke20}
  A.~Mielke, A.~Stephan. Coarse-graining via EDP-convergence for
  linear fast-slow reaction systems. {\it Math. Models 
  Meth. Appl. Sci.} 30  (2020), 1765--1807.

  
\bibitem{Mielke21}
A.~Mielke, A.~Montefusco, M.~Peletier. Exploring families of
energy-dissipation landscapes via tilting: three types of EDP
convergence. {\it Contin. Mech. Thermodyn.} 33 (2021),
611--637.



\bibitem{Mophou11}
G.~Mophou, G.~M.~N'Gu\'er\'ekata. Optimal control of a fractional
diffusion equation with state constraints. {\it Comput. Math. Appl.} 
  62 (2011), 1413--1426.



\bibitem{Nayroles76}
B.~Nayroles.
\newblock Deux th\'eor\`emes de minimum pour certains syst\`emes dissipatifs.
\newblock {\em C. R. Acad. Sci. Paris S\'er. A-B}, 282(17):Aiv, A1035--A1038,
  1976.

\bibitem{Nayroles76b}
B.~Nayroles.
\newblock Un th\'eor\`eme de minimum pour certains syst\`emes dissipatifs.
  {V}ariante hilbertienne.
\newblock {\em Travaux S\'em. Anal. Convexe}, 6(Exp. 2):22, 1976.




\bibitem{Pard0}
\'E.~Pardoux.
Sur des \'equations aux d\'eriv\'es partielles stochastiques
monotones. {\em C. R. Acad. Sci. Paris S\'er. A-B}, 275 (1972), A101--A103.


\bibitem{Pard}
E.~Pardoux.
\newblock {\em Equations aux deriv\'ees partielles stochastiques nonlin\'eaires
  monotones}.
\newblock PhD thesis, Universit\'e Paris XI, 1975.



\bibitem{portinale}
  L.~Portinale, U.~Stefanelli.
Penalization via global functionals of optimal-control problems for dissipative evolution.
{\it Adv. Math. Sci. Appl.} 28 (2019), 425--447.


\bibitem{pr}
C.~Pr\'ev\^ot, M.~R\"ockner. {\em A concise course on stochastic
partial differential equations}. Lecture Notes in Mathematics,
1905. Springer, Berlin, 2007.



\bibitem{Roubicek00}
{T.~Roub{\'{\i}}{\v{c}}ek}.
\newblock {Direct method for parabolic problems}.
\newblock {\it Adv. Math. Sci. Appl.}  10 (2000),   57--65.



\bibitem{Sandier2004}
E.~Sandier, S.~Serfaty. Gamma-convergence of gradient flows with
  applications to {G}inzburg-{L}andau. {\it  Comm. Pure Appl. Math.} 57 (2004),
 1627--1672.


\bibitem{scar-div}
L.~Scarpa.
\newblock Well-posedness for a class of doubly nonlinear stochastic {PDE}s of
  divergence type.
\newblock {\em J. Differential Equations}, 263 (2017), 2113--2156.




\bibitem{ScarStef-SDNL}
L.~Scarpa, U.~Stefanelli.
\newblock Doubly nonlinear stochastic evolution equations.
\newblock {\em Math. Models Methods Appl. Sci.}, 30 (2020), no. 5,
991--1031.

\bibitem{biot}
L.~Scarpa, U.~Stefanelli.
\newblock Doubly nonlinear stochastic evolution equations II.
\newblock {\tt arXiv:2009.08209}, 2020.


%
%
\bibitem{ScarStef-WED}
L.~Scarpa, U.~Stefanelli.
\newblock Stochastic PDEs via convex minimization.
\newblock {\it Comm. Partial Differential Equations}, 46 (2021), no. 1,
66--97


\bibitem{Serfaty11}
S.~Serfaty. Gamma-convergence of gradient flows on Hilbert and metric
spaces and applications. {\it Discrete Contin. Dyn. Syst.} 31 (2011),
1427--1451. 

\bibitem{Simon}
J.~Simon.
\newblock Compact sets in the space {$L^p(0,T;B)$}.
\newblock {\em Ann. Mat. Pura Appl. (4)}, 146 (1987), 65--96.


%
%
%
%
%
%
%
%
%
%
%
%
%
%
%
%
%
%

\bibitem{Visintin1}
A.~Visintin. Variational formulation and structural stability of
monotone equations. {\it Calc. Var. Partial Differential Equations}, 47 (2013), 273--317.

\bibitem{Visintin2}
A.~Visintin. On Fitzpatrick's theory and stability of
flows. {\it Rend. Lincei. Mat. Appl.} 27 (2016), 1--30.


%

\bibitem{Visintin}
A.~Visintin. Structural compactness and stability of semi-monotone
flows. {\it SIAM J. Math. Anal.} 50 (2018), no. 3, 2628--2663.


\end{thebibliography}
\bibliographystyle{abbrv}


\end{document}